\theoremstyle{plain}
\newtheorem{theorem}{Theorem}
\newtheorem{proposition}[theorem]{Proposition}
\newtheorem{definition}[theorem]{Definition}
\newtheorem{lemma}[theorem]{Lemma}
\newtheorem{remark}[theorem]{Remark}
\def\Hs{\mathscr{H}}
\def\I{\mathcal{I}}
\def\mf{\mathfrak{m}}
\def\NN{\mathbb{N}}
\def\PP{\mathbb{P}}
\DeclareMathOperator*{\depth}{depth}
\DeclareMathOperator*{\HF}{HF}
\DeclareMathOperator*{\Proj}{Proj}
\DeclareMathOperator*{\reg}{reg}
\DeclareMathOperator*{\Res}{Res}
\DeclareMathOperator*{\sat}{sat}
\DeclareMathOperator*{\Tor}{Tor}
\DeclareMathOperator*{\Tr}{Tr}
\title{On the Castelnuovo-Mumford regularity of subspace arrangements}
\thanks{This work was supported by the National Key R\&D Program of China under Grant No. 2023YFA1009402}
\author{Aldo Conca}
\address{Dipartimento di Matematica,  Universit\`a di Genova, \\Via Dodecaneso 35, 16146 Genova, Italy}
\email{conca@dima.unige.it}
\author{Manolis C. Tsakiris}
\address{State Key Laboratory of Mathematical Sciences, \\ Academy of Mathematics and Systems Science, \\Chinese Academy of Sciences, 100190, Beijing, China}
\email{manolis@amss.ac.cn}
\begin{document}

\keywords{subspace arrangements, Castelnuovo-Mumford regularity, linear resolutions, Hilbert functions, flat degenerations}

\subjclass[2020]{13D02,14M07,14B15}

\maketitle

\vspace{0.3in}

\begin{abstract}
Let $X$ be a union of $n$ generic linear subspaces of codimension $>1$ in $\mathbb{P}^d$. 
Improving an earlier bound due to Derksen and Sidman \cite{derksen2002sharp} we prove that the Castelnuovo-Mumford regularity  of $X$ satisfies $ \reg(X) \le n - \lfloor{n / (2d-1)\rfloor}$.

\end{abstract}

\section{Introduction}

The notion of Castelnuovo-Mumford regularity was introduced by Mumford in \cite{mumford1966lectures}, to systematically control the vanishing of sheaf cohomology. Since then it has proved to be a fundamental notion throughout algebraic geometry and commutative algebra, both from the theoretical and computational perspectives. For instance, proofs for the existence of the Hilbert scheme rely on it \cite{kollar1999rational,mumford1966lectures}, while so do complexity bounds for Gr\"obner bases computations \cite{bayer1993can}. It even plays a role in the theory of subspace clustering \cite{tsakiris2017filtrated} in machine learning. 

In the present work we are concerned with the Castelnuovo-Mumford regularity $\reg (X)$ of the reduced union $X$ of $n$ generic linear spaces $X_1,\dots,X_n$ of arbitrary dimensions in a projective space $\PP_k^d$ over an infinite field $k$. When there are no pairwise intersections between the $X_i$'s, the regularity can be extracted from another important invariant; the Hilbert function. For points, i.e. when $\dim X_i=0$, the Hilbert function is easy to compute; e.g. see \cite{geramita1983hilbert}. However, for higher dimensions determining the Hilbert function is very challenging. A landmark achievement regarding this problem is the work of Hartshorne \& Hirschowitz \cite{hartshorne1982droites} who settled the case of lines. Another important step forward was taken by Derksen \cite{derksen2007hilbert}, who gave a formula for the Hilbert polynomial of $X$ and proved that it agrees with the Hilbert function at degrees $\ge n$. On the other hand, the Hilbert function is largely unknown at degrees $<n$. 

The hardest part of the proof of Hartshorne \& Hirschowitz \cite{hartshorne1982droites} was to treat the case $d=3$, which was done by degeneration techniques via smooth quadric surfaces. Recently a new proof was given by Aladpoosh \& Catalisano \cite{aladpoosh2021hartshorne}, where degeneration techniques via linear spaces replaced those via quadrics. These have the potential of generalizing to higher dimensions and have inspired us in this article. 

For generic lines the formula for the regularity has been derived by Rice \cite{rice2022generic} from \cite{hartshorne1982droites}. Apart from this, there has been a single general result regarding $\reg (X)$, due to Derksen and Sidman \cite{derksen2002sharp}, who proved that $\reg (X) \le n$ for any reduced union of linear spaces, and provided examples of (special) configurations showing that their result is sharp; for special subspace arrangements there exist various results such as \cite{benedetti2018regularity} and \cite{bjorner2005subspace}. 

In this paper we prove:

\begin{theorem} \label{thm:main}
For $n$ generic linear spaces $X_1,\dots,X_n$ of codimension bigger than $1$ in $\mathbb{P}_k^d$, the regularity of their reduced union $X$ satisfies  
$$\reg (X) \le n - \lfloor{n / (2d-1)\rfloor}.$$
\end{theorem}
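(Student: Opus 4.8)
The plan is to induct on $n$, peeling off subspaces one at a time and using a short exact sequence that relates the ideal of $X = X_1 \cup \dots \cup X_n$ to those of a smaller arrangement and of a linear section. Concretely, write $I = I_{X'} \cap I_{X_n}$ where $X' = X_1 \cup \dots \cup X_{n-1}$, and consider the Mayer--Vietoris-type sequence
\[
0 \to I_X \to I_{X'} \oplus I_{X_n} \to I_{X'} + I_{X_n} \to 0.
\]
Since $X_n$ is a linear space and the $X_i$ are generic, $I_{X'} + I_{X_n}$ is, up to saturation, the ideal of the arrangement $X' \cap X_n$ inside the linear space $X_n \cong \PP^{d_n}$; its components are $X_i \cap X_n$, again generic subspaces of $X_n$, and there are $n-1$ of them. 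The key numerical point is that $\codim(X_i \cap X_n)$ inside $X_n$ can be larger than $\codim(X_i)$ inside $\PP^d$ once $\dim X_i + \dim X_n < d$, i.e. intersections become empty, so that only \emph{some} of the $n-1$ restricted components survive — and this is exactly where the saving of $\lfloor n/(2d-1)\rfloor$ will come from.

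The first step is to fix a good inductive hypothesis: rather than just tracking $\reg$, I would carry a bound of the form $\reg(X) \le n - s(n,d)$ that is strong enough to close the induction, with $s(n,d) = \lfloor n/(2d-1) \rfloor$ (perhaps proven in the slightly strengthened form $\reg(X) \le \max_j (\text{something depending on the codimensions})$ to handle the mixed-dimension bookkeeping). Because $\reg$ of a short exact sequence satisfies $\reg(I_X) \le \max\{\reg(I_{X'} \oplus I_{X_n}),\, \reg(I_{X'}+I_{X_n}) + 1\}$, one reduces $\reg(X)$ to (a) $\reg(X')$, handled by induction on $n$ with the same $d$, and (b) $\reg$ of the $(n-1)$-component arrangement $X' \cap X_n$ living in $\PP^{d_n}$ with $d_n \le d-2$, handled by induction on $d$. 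The arithmetic to verify is that $\max\{\, (n-1) - \lfloor (n-1)/(2d-1)\rfloor,\ 1 + (n-1) - \lfloor (n-1)/(2d_n-1)\rfloor \,\} \le n - \lfloor n/(2d-1)\rfloor$; the first term is immediate, and the second is where the codimension-$>1$ hypothesis (ensuring $d_n \le d-2$, so $2d_n - 1 \le 2d-5 < 2d-1$) must be leveraged — but naively $2d_n-1$ being \emph{smaller} makes the floor \emph{bigger}, so this direct approach needs the refinement below.

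The refinement, and the main obstacle, is that the restricted arrangement $X' \cap X_n$ is not generic among \emph{all} arrangements in $\PP^{d_n}$ with prescribed dimensions; it is special, and in particular many components coincide or become empty or redundant, so the effective number of components is $n' \le n-1$, strictly less when $d$ is small relative to the $\dim X_i$. Quantifying $n'$ — showing that at each peeling step the component count drops enough that after iterating one gains a full unit of regularity for every roughly $2d-1$ subspaces removed — is the crux. I expect to handle this by a careful double induction: order the subspaces so that $X_n$ is chosen to maximize the number of empty intersections $X_i \cap X_n = \emptyset$, use genericity to compute exactly which $X_i \cap X_n$ are nonempty (a dimension count: nonempty iff $\dim X_i + \dim X_n \ge d$), and track the resulting recursion on $(n,d)$. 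A secondary technical point, needed to make the short exact sequence argument rigorous at the level of saturated ideals rather than just sheaves, is to control the regularity defect between $I_{X'}+I_{X_n}$ and the saturated ideal of $X'\cap X_n$; I would manage this either via the genericity (which should make the sum ideal itself saturated in the relevant range) or by invoking the Derksen--Sidman bound $\reg \le n$ as a crude a priori estimate on the low-degree discrepancy, then bootstrapping. The flat-degeneration techniques flagged in the introduction (à la Aladpoosh--Catalisano) are the expected tool for establishing the needed Hilbert-function comparisons when the direct exact-sequence bookkeeping is too lossy.
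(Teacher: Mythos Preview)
Your plan has a genuine gap, but not where you locate it. You worry that the inequality $1+(n-1)-\lfloor(n-1)/(2d_n-1)\rfloor\le n-\lfloor n/(2d-1)\rfloor$ might fail; in fact it holds whenever $d_n\le d-2$ (equivalently $\lfloor(n-1)/(2d_n-1)\rfloor\ge\lfloor n/(2d-1)\rfloor$, which is an easy check), so no ``refinement'' counting empty components is needed for the numerics. Moreover, for generic $X_1,\dots,X_n$ the traces $X_i\cap X_n$ are themselves generic codimension-$c_i$ subspaces of $X_n$, so the restricted arrangement is not special in the sense you suggest.

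The real obstruction is what you call ``secondary'': the Mayer--Vietoris bound involves $\reg(I_{X'}+I_{X_n})$, not the regularity of the saturated ideal of $X'\cap X_n$ inside $X_n$. From the induction on $n$ alone you get only $\reg(I_{X'}+I_{X_n})\le\reg(I_{X'})$, which feeds back to $\reg(I_X)\le 1+\reg(I_{X'})$ and recovers nothing beyond Derksen--Sidman. To invoke the induction on $d$ you must replace $\bar J=(I_{X'}+I_{X_n})/I_{X_n}$ by $\bar J^{\sat}$ and control $\reg(\bar J^{\sat}/\bar J)$. At the critical value $n=2d-1$, where $\lfloor n/(2d-1)\rfloor$ jumps from $0$ to $1$, your recursion gives at best $\reg(I_X)\le n$; closing that gap is precisely a Hilbert-function statement of the type the paper establishes for Theorem~\ref{thm:codim2}, and there is no shortcut around it. (Already for lines in $\PP^3$ the restriction lands in $\PP^1$, where $n-1$ points have $\reg=n-1$, so the peeling step yields no gain at all.)

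The paper's route is entirely different and avoids one-at-a-time peeling. It (i) proves directly, via flat degenerations and Hilbert-function identities, that $2d-1$ generic codimension-$2$ subspaces of $\PP^d$ have $\reg=2d-2$ with a linear resolution (Theorem~\ref{thm:codim2}); (ii) shows via Theorem~\ref{thm:JL} that increasing the codimension of any component cannot raise $\reg$; and (iii) applies a sub-additivity lemma $\reg(I_{\mathcal A}\cap J_{\mathcal B})\le\reg(I_{\mathcal A})+\reg(J_{\mathcal B})$ for generic linear arrangements to combine $\lfloor n/(2d-1)\rfloor$ blocks of size $2d-1$, each contributing $2d-2$, with a remainder bounded by Derksen--Sidman.
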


At the heart of our proof lies a surprising result: 

\begin{theorem} \label{thm:codim2}
For $n=2d-1$ generic linear spaces $X_1,\dots,X_n$ of codimension $2$ in $\mathbb{P}_k^d$, the regularity of their reduced union $X$ is equal to $n-1$ and the saturated ideal $I_X$ that defines $X$ has a linear graded minimal free resolution. 
\end{theorem}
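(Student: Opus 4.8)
The plan is to split Theorem~\ref{thm:codim2} into a regularity estimate and a Hilbert‑function computation, and to feed the latter into the former by a standard homological characterization. Write $S=k[x_0,\dots,x_d]$ and $n=2d-1$, and recall that a nonzero saturated homogeneous ideal $I\subseteq S$ admits a linear minimal graded free resolution, generated in degree $e$, if and only if $I_j=0$ for every $j\le e-1$ and $\reg(S/I)\le e-1$: the inequality $\reg(S/I)\le e-1$ (i.e.\ $\reg(I)\le e$) bounds the generating degrees of $I$ by $e$, the vanishing $I_{\le e-1}=0$ forces every generator into degree exactly $e$, and then minimality together with $\reg(I)\le e$ propagates linearity from $F_0=S(-e)^{\beta_0}$ up each $F_i=S(-e-i)^{\beta_i}$. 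Applying this with $e=n-1$, and recalling that $(I_X)_{n-2}=0$ already forces $(I_X)_j=0$ for all $j\le n-2$, Theorem~\ref{thm:codim2} reduces to the two assertions
\[
\text{(A)}\quad (I_X)_{n-2}=0,\qquad\qquad\text{(B)}\quad \reg(S/I_X)\le n-2 ,
\]
for $X$ the union of $n=2d-1$ generic $\PP^{d-2}$'s in $\PP^d_k$: granting both, $I_X$ has a linear resolution generated in degree $n-1$, whence $\reg(X)=\reg(I_X)=n-1$ (a generator in degree $n-1$ giving the reverse inequality).

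I would deduce (A) from (B). Once (B) holds, $\HF_{S/I_X}(t)=\HP_X(t)$ for all $t\ge\reg(S/I_X)$, so (A) amounts to the identity $\HP_X(n-2)=\binom{n-2+d}{d}$. By Derksen's formula \cite{derksen2007hilbert} for a generic arrangement --- whose $k$‑fold intersections are generic $\PP^{d-2k}$'s, empty once $2k>d$ ---
\[
\HP_X(t)=\sum_{k=1}^{\floor{d/2}}(-1)^{k-1}\binom{2d-1}{k}\binom{t+d-2k}{d-2k}.
\]
Putting $t=n-2=2d-3$, writing $\binom{3d-3-2k}{d-2k}=\binom{3d-3-2k}{2d-3}$ as the coefficient of $x^{d-2k}$ in $(1-x)^{-(2d-2)}$, and using $\sum_{k\ge 0}(-1)^k\binom{2d-1}{k}x^{2k}=(1-x^2)^{2d-1}=(1-x)^{2d-1}(1+x)^{2d-1}$, one obtains
\[
\binom{n-2+d}{d}-\HP_X(n-2)=[x^d]\,(1-x)(1+x)^{2d-1}=\binom{2d-1}{d}-\binom{2d-1}{d-1}=0 ,
\]
which proves (A). What is worth underlining is that Derksen only gives $\HF_{S/I_X}=\HP_X$ in degrees $\ge n$; (B) is exactly what pushes this down to degree $n-2$, where the coincidence $\HP_X(n-2)=\dim_k S_{n-2}$ takes place.

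Statement (B) is the heart of the proof and, I expect, its main obstacle. Since Derksen--Sidman \cite{derksen2002sharp} already give $\reg(X)\le n$, i.e.\ $\reg(S/I_X)\le n-1$, it remains only to improve this by one, that is, to establish the finitely many vanishings $H^i_{\mf}(S/I_X)_{n-1-i}=0$ for $1\le i\le d-1$, equivalently $h^i(\PP^d,\I_X(n-1-i))=0$ (the case $i=1$, projective normality in degree $n-2$, being the subtlest). Each of these is upper semicontinuous on the irreducible parameter space of $n$‑tuples of codimension‑$2$ linear subspaces of $\PP^d$, so it is enough to verify each on a single, well‑chosen tuple. Such tuples I would build and analyze in the spirit of Hartshorne--Hirschowitz \cite{hartshorne1982droites} and Aladpoosh--Catalisano \cite{aladpoosh2021hartshorne}: fix a hyperplane $H=V(h)$, let some of the subspaces degenerate onto $H$ --- where they become generic hyperplanes of $H\cong\PP^{d-1}$ --- while the rest stay generic, and use the Castelnuovo--Horace exact sequence
\[
0\longrightarrow \I_{X:H}(-1)\longrightarrow \I_X\longrightarrow \I_{X\cap H,\,H}\longrightarrow 0
\]
to express the cohomology of $\I_X$ through that of the residual scheme $X:H$ (fewer subspaces, still in $\PP^d$) and of the section $X\cap H$, which is a \emph{mixed} generic arrangement of hyperplanes and codimension‑$2$ subspaces in $\PP^{d-1}$. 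The induction must therefore be carried out for a correspondingly more general statement about such mixed arrangements; its base cases --- three general points of $\PP^2$, and pure hyperplane arrangements, whose ideals are principal --- are immediate. The genuine difficulty is numerical rather than conceptual: one must apportion the $n=2d-1$ subspaces between $H$ and its complement so that the residual and section terms, together with the correction coming from the fact that $X:H$ and $X\cap H$ need not be cut out by the expected ideals (the familiar subtlety, already present in Derksen--Sidman, that restriction to $H$ does not commute with intersection of subspace ideals), all keep the bound at $n-2$. The value $n=2d-1$ is exactly the threshold at which this accounting closes --- for smaller $n$ one recovers only $\reg(X)\le n$ --- and making the apportionment explicit and verifying that the induction does not degrade is the step I expect to cost the most effort.
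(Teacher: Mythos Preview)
Your reduction to the pair (A)~$(I_X)_{n-2}=0$ and (B)~$\reg(S/I_X)\le n-2$ is correct, and your generating-function derivation of (A) from (B) via $\HP_X(n-2)=\dim_k S_{n-2}$ is clean and valid. But you have the logical flow inverted relative to the paper, and this matters. The paper proves (A) \emph{first} and directly (Lemma~\ref{lem:zero in degree n-2}), by a double induction that degenerates two of the codimension-$2$ spaces into a $(2,2)$-\emph{sundial} (two codimension-$2$ spaces meeting in codimension $3$, thickened at the intersection), then applies Castelnuovo's inequality along the hyperplane supporting the sundial; the residual drops the number of subspaces and the trace drops the ambient dimension. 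Only after (A) is in hand does the paper prove (B), by a generic hyperplane section and the short exact sequence comparing $\cap_i I_i+(h)$ with $\cap_i(I_i+(h))$; the crucial point is that the ``saturation defect'' you yourself flag---the failure of restriction to $H$ to commute with intersecting the $I_i$---is a finite-length module whose regularity bound reduces exactly to the single Hilbert-function equality $\HF(\cap_i(I_i+(h)),n-1)=\HF(\cap_i I_i+(h),n-1)$, and that equality is established using (A).

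So the gap in your proposal is not the numerics but the architecture: you intend to prove (B) by degeneration without having (A), yet the saturation correction you acknowledge is precisely the obstruction that (A) removes. Your suggested degeneration (pushing some $X_i$ into $H$ so they become hyperplanes of $H$) is not what the paper does and would produce non-generic trace configurations; the paper's sundial degeneration is designed instead to attack (A), keeping the configurations linearly general at every stage and invoking an auxiliary statement (Lemma~\ref{lem:auxiliary}) about a codimension-$2$ arrangement plus one higher-codimension piece. In short: your (B)$\Rightarrow$(A) is a pleasant shortcut around the paper's explicit Hilbert-function lemmas (Lemmas~\ref{lem:HF at n-1} and the final binomial identity), but it does not let you bypass proving (A) independently---and that independent proof, via sundials, is where the real work lies.
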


Another device that plays a role in the proof of Theorem \ref{thm:main}, and of potential further significance to the problem of determining $\reg(X)$, is:


\begin{theorem} \label{thm:JL}  
Let $J$ be any non-zero saturated homogeneous ideal of a polynomial ring $S$ of dimension $r$ over $k$. For $c=1,\dots,r$ let $L_{c}$ be an  ideal of $S$ generated by $c$ generic linear forms. Then 
$$ \reg(J\cap L_c) = \left\{
\begin{matrix}
&\reg(J)+1, \, \, & \mbox{ if } c\leq \gamma \\
&\reg(J), \, \, &   \mbox{ if } c> \gamma 
\end{matrix} \right.,$$ where $\gamma = \max\{ c: \reg(J+L_{c})=\reg(J)\}$. 
\end{theorem}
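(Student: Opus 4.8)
The plan is to establish the single identity
\[
\reg\bigl(S/(J\cap L_c)\bigr)=\max\bigl(\reg(S/J),\ \reg(S/(J+L_c))+1\bigr);
\]
since $\reg(I)=\reg(S/I)+1$ for every proper nonzero homogeneous ideal $I$, this is the same as $\reg(J\cap L_c)=\max(\reg(J),\reg(J+L_c)+1)$, and it implies the theorem at once. Indeed, because $J$ is saturated a generic linear form is a nonzerodivisor on $S/J$, and for a generic linear form $\ell$ and any finitely generated graded module $M$ one has $(0:_M\ell)\subseteq H^0_{\mf}(M)$, whence $\reg(M/\ell M)\le\reg(M)$; applying this with $M=S/(J+L_j)$ stage by stage shows that $\reg(S/(J+L_c))$ is non-increasing in $c$, equal to $\reg(S/J)$ for $c\le\gamma$ and strictly smaller for $c>\gamma$ (and that $\gamma\ge1$). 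Substituting these values into the identity produces the asserted formula.

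The inequality ``$\le$'' comes from the Mayer--Vietoris sequence $0\to S/(J\cap L_c)\to S/J\oplus S/L_c\to S/(J+L_c)\to0$ together with $\reg(S/L_c)=0$, via the standard regularity estimate for the kernel term of a short exact sequence. The same sequence disposes of ``$\ge$'' when $c>\gamma$: the regularity of the middle module is bounded by the maximum of those of the outer two, so $\reg(S/J)\le\max\bigl(\reg(S/(J\cap L_c)),\reg(S/(J+L_c))\bigr)$, and since $\reg(S/(J+L_c))<\reg(S/J)$ here we get $\reg(S/(J\cap L_c))\ge\reg(S/J)$, hence equality.

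The case $c\le\gamma$ is the heart of the matter: now $\reg(S/(J+L_c))=\reg(S/J)$, and one must prove the strict bound $\reg(S/(J\cap L_c))\ge\reg(S/J)+1$. I would use the short exact sequence $0\to J/(J\cap L_c)\to S/(J\cap L_c)\to S/J\to0$, the isomorphism $J/(J\cap L_c)\cong(J+L_c)/L_c\subseteq S/L_c$, and the description of $S/(J+L_c)$ as the quotient of $S/J$ by the image of $L_c$. Writing $a_i(N)=\max\{j:H^i_{\mf}(N)_j\neq0\}$ and picking $q\ge1$ with $a_q(S/(J+L_c))+q=\reg(S/(J+L_c))$, the long exact sequence in local cohomology of this sequence (together with that of $0\to(J+L_c)/L_c\to S/L_c\to S/(J+L_c)\to0$) reduces the bound to the assertion that the natural map $H^q_{\mf}(S/J)\to H^q_{\mf}(S/(J+L_c))$ is \emph{not} surjective in degree $a_q(S/(J+L_c))$, for a suitable such $q$. (The case in which $\reg(S/(J+L_c))$ is attained only at $q=0$ is immediate, because the Mayer--Vietoris connecting map embeds $H^0_{\mf}(S/(J+L_c))$ into $H^1_{\mf}(S/(J\cap L_c))$, both $S/J$ and $S/L_c$ having depth $\ge1$.)

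This non-surjectivity is where I expect the real work to be. It should follow by regarding $S/(J+L_c)$ as the iterated generic hyperplane section of $S/J$ by $c$ linear forms: for one generic cut $A\rightsquigarrow A/\ell A$, the cokernel of $H^p_{\mf}(A)\to H^p_{\mf}(A/\ell A)$ is canonically the $\ell$-torsion submodule of $H^{p+1}_{\mf}(A)(-1)$, which is nonzero exactly when $H^{p+1}_{\mf}(A)\neq0$ and then already attains degree $a_{p+1}(A)+1$; choosing the cohomological index so that $p+1$ realizes the regularity of the current stage propagates a non-surjectivity in the correct degree through all $c$ cuts. The stages of depth $0$ require care, since there $\ell$ acquires torsion supported at $\mf$, but this perturbs only $H^0_{\mf}$ and $H^1_{\mf}$ and leaves the mechanism intact. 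A possible substitute for this bookkeeping is an induction on $c$ through the exact sequence
\[
0\to(S/J)(-1)\xrightarrow{\ \ell_c\ }S/(J\cap L_c)\to\overline S/(\overline J\cap\overline L_{c-1})\to0,\qquad\overline S=S/(\ell_c),
\]
which is exact since $(J\cap L_c):\ell_c=J$ while $(0:_{J+L_c}\ell_c)=0$ forces $\Tor_1^S(J+L_c,S/(\ell_c))=0$, so that the image of $J\cap L_c$ in $\overline S$ is $\overline J\cap\overline L_{c-1}$; one has $\reg(\overline J)=\reg(J)$ and $\gamma(\overline J)=\gamma(J)-1$, and the base case $c=1$ is $J\cap L_1=\ell_1J$. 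Its drawback is that $\overline J$ need not be saturated --- precisely when $\depth(S/J)=1$ --- so one would have to enlarge the inductive statement to arbitrary homogeneous ideals and control the finite-length discrepancy between $J\cap L_c$ and $(J\cap L_c)^{\sat}=J^{\sat}\cap L_c$, which embeds into $H^0_{\mf}(S/J)$.
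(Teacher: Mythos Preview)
Your reformulation as the identity
\[
\reg\bigl(S/(J\cap L_c)\bigr)=\max\bigl(\reg(S/J),\ \reg(S/(J+L_c))+1\bigr)
\]
is correct and convenient, and your treatment of the inequality ``$\le$'', of the case $c>\gamma$, and of the case $c\le\gamma$ with the regularity of $S/(J+L_c)$ witnessed by $H^0_{\mf}$ is fine and matches what the paper does. The genuine gap is exactly where you say it is: for $c\le\gamma$ when the regularity of $S/(J+L_c)$ is witnessed only by some $H^q_{\mf}$ with $q\ge1$, you do not prove the required non-surjectivity of $H^q_{\mf}(S/J)\to H^q_{\mf}(S/(J+L_c))$ in the relevant degree. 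Your first sketch (propagating cokernels through successive generic cuts) observes correctly that for a single regular cut $A\rightsquigarrow A/\ell A$ the cokernel of $H^p_{\mf}(A)\to H^p_{\mf}(A/\ell A)$ equals $(0:_{H^{p+1}_{\mf}(A)}\ell)(-1)$ and is nonzero in degree $a_{p+1}(A)+1$; but this produces non-surjectivity of a \emph{single step} in a possibly shifting cohomological index, and you give no argument that the composite map, in a \emph{fixed} pair $(q,a)$ with $q+a=\reg(S/J)$, fails to be surjective. Your second sketch (induction via the exact sequence $0\to(S/J)(-1)\to S/(J\cap L_c)\to\overline S/(\overline J\cap\overline L_{c-1})\to0$) is correct as a sequence and the numerics would close the induction, but as you note $\overline J$ is not saturated once $\depth(S/J)=1$, and you do not carry out the extension to non-saturated ideals.

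The paper circumvents this difficulty altogether by first proving the monotonicity $\reg(J\cap L_c)\ge\reg(J\cap L_{c+1})$ for all $c$. Granting this, one may in the case $c\le\gamma$ replace $c$ by $\gamma$ and thus assume that the \emph{next} generic cut strictly drops the regularity: with $M=S/(J+L_c)$ and $x=h_{c+1}$ one has $\reg(M/xM)<\reg(M)$, so by the standard formula $\reg(M)=\max\{\reg(M/xM),\reg(0:_Mx)\}$ the regularity of $M$ is carried by the finite-length module $0:_Mx\subseteq H^0_{\mf}(M)$. This forces the regularity of $M$ to be witnessed by $H^0_{\mf}(M)$, and your own $q=0$ argument via the Mayer--Vietoris injection $H^0_{\mf}(M)\hookrightarrow H^1_{\mf}(S/(J\cap L_c))$ then finishes the proof. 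In short, the monotonicity converts the troublesome $q\ge1$ situation into the $q=0$ situation you already handled.

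The monotonicity itself is where the paper does the real work. Arguing by contradiction that $\reg(J\cap L_{c+1})>\reg(J\cap L_c)$, one writes $L_{c+1}=L_c+L_1'$ and sets $K=(J\cap L_c)+(J\cap L_1')$, so that $JL_{c+1}\subseteq K\subseteq J\cap L_{c+1}$ and $S/K$ has the same higher local cohomology as $S/(J\cap L_{c+1})$. The short exact sequence $0\to S/(J\cap L_c\cap L_1')\to S/(J\cap L_1')\oplus S/(J\cap L_c)\to S/K\to0$ then forces, at the extremal pair $(i,a)$ realizing $\reg(S/(J\cap L_{c+1}))$, a surjection
\[
\Bigl[H^i_{\mf}\bigl(S/(J\cap L_1')\bigr)\Bigr]_a\twoheadrightarrow\Bigl[H^i_{\mf}\bigl(S/(J\cap L_{c+1})\bigr)\Bigr]_a\neq0
\]
induced by the canonical projection $\pi_{1,c+1}$. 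The decisive trick is to factor $\pi_{1,c+1}$ through $S/(J\cap L_c')$ for another generic $L_c'$ with $L_1'\subset L_c'\subset L_{c+1}$: by genericity $\reg(J\cap L_c')=\reg(J\cap L_c)$, so the intermediate cohomology group vanishes in degree $a$, contradicting the surjectivity. This factorization through a \emph{different} generic $c$-dimensional linear ideal is the idea your proposal is missing.
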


\noindent In Theorem \ref{thm:JL} one always has $\gamma \ge \depth (S/J)$ and in many cases the inequality is strict, e.g. if $J$  has a linear resolution and $c<r$, then $(J+L_c)/L_c$ has a linear resolution as well and thus $\gamma\geq r-1$. 

The bound of Theorem \ref{thm:main} is in general not sharp. As asserted though by Theorem \ref{thm:codim2}, it is sharp when $X$ is the union of $n=2d-1$ codimension-$2$ generic linear spaces in $\mathbb{P}_k^d$. For that particular configuration, Theorem \ref{thm:codim2} implies that the Hilbert function of $X$ is fully determined: it coincides with the Hilbert function of $\mathbb{P}_k^d$ at degrees $< 2d-2$, while it agrees thereafter with the Hilbert polynomial of $X$(described in \cite{derksen2007hilbert}). Moreover, the ideal of $X$ has a linear resolution; we are not aware of other codimension-$2$ configurations where this phenomenon occurs. It further follows from our arguments that whenever the union of $n_0$ codimension-$2$ generic linear spaces in $\mathbb{P}_k^d$ has regularity $n_0- \alpha$, then a union of $n$ generic linear spaces of codimension $\ge 2$ in $\mathbb{P}_k^d$ has regularity $\le n - \alpha \lfloor{n / n_0\rfloor}$.

In \S \ref{section:conventions} we set up notational conventions and in \S \ref{section:preliminaries} we review preliminaries. In \S \ref{section:proof-Thm-codim2} we prove Theorem \ref{thm:codim2} and in \S \ref{section:proof-Thm-JL} we prove Theorem \ref{thm:JL}. Finally we prove Theorem \ref{thm:main} in \S \ref{section:proof-Thm-main}. 

\section{Conventions} \label{section:conventions} 
In the rest of the paper we set $r = d+1$ and we use $\mathbb{P}^{r-1}_k = \Proj S$ for $\mathbb{P}^d_k$, where $S=k[x_1,\dots,x_r]$ is the polynomial ring of dimension $r \ge 3$ over the infinite field $k$, endowed with the standard $\mathbb{Z}$-grading. We denote by $\mathfrak{m}=(x_1,\dots,x_r)$ the unique maximal homogeneous ideal of $S$. For a finitely generated graded $S$-module $M$ we denote by $[M]_{\nu}$ the component of $M$ of degree $\nu$, while the Hilbert function $\HF(M,\nu)$ gives the dimension of the $k$-vector space $[M]_\nu$. 

For $Y$ a closed subscheme of $\PP_k^{r-1}$, we let $\I_Y$ be the ideal sheaf of $Y$, and $I_Y$ the unique graded saturated ideal of $S$ for which $\I_Y = \tilde{I}_Y$ \textemdash {here $\tilde{I}_Y$ is the coherent sheaf induced by $I_Y$}. When we say that a scheme $Y$ contains a scheme $Z$, we will always mean an inclusion of ideals $I_Y \subset I_Z$. Similarly, by the union $Y \cup Z$ of two schemes we will mean the scheme defined by the ideal $I_Y \cap I_Z$. 

With $b$ a non-negative integer and $n$ any integer, we denote the ordinary numerical binomial coefficient as ${n \choose b} = n! / ((n-b)!b !)$, with the convention that it is $0$ if $n < b$. With $t$ an element in a commutative ring that contains $\mathbb{Q}$, we denote the binomial polynomial coefficient as $$\left[ \begin{matrix} t \\ b \end{matrix} \right] = \frac{(t-b+1)\cdots (t-1) t }{ b!}.$$ For any integer $a$ we have $\left[ \begin{smallmatrix} a \\ b \end{smallmatrix} \right] = {a \choose b}$ if and only if $a \ge 0$. 

Finally, for a positive integer $\ell$ we set $[\ell]=\{1,\dots,\ell\}$. 





\section{Preliminaries} \label{section:preliminaries}

\subsection{Regularity}
The Castelnuovo-Mumford regularity of a closed scheme $Y \subset \PP_k^{r-1}$ can be defined as the Castelunuovo-Mumford regularity $\reg(I_Y)$ of the saturated ideal $I_Y \subset S$ that defines $Y$. Here more generally we review the Castelnuovo-Mumford regularity of a finitely generated graded module $M$ over the polynomial ring $S$ \cite{Eisenbud:CA,bruns1998cohen,peeva2010graded}, and quote basic properties that we will need. 

In terms of local cohomology, the regularity of $M$ is defined as 
$$\reg(M) = \max\{i+j: \, [H_{\mathfrak{m}}^i(M)]_j \neq 0\}.$$ In terms of the minimal graded free resolution of $M$, $\reg(M)$ is the maximum among the numbers $b_i - i$, where $b_i$ is the largest degree of a minimal generator of the $i$-th syzygy module of $M$; that is $$\reg(M) = \max\{b - i: \, [{\Tor}_i^S(M,k)]_b \neq 0\}.$$

The following fact is known as the \emph{regularity lemma}:

\begin{proposition} \label{prp:regularity lemma}
Let $0 \rightarrow M' \rightarrow M \rightarrow M'' \rightarrow 0$ be a short exact sequence of finitely generated graded $S$-modules and maps of degree zero. Then 
\begin{align*}
\reg(M) &\le \max \{ \reg(M'), \reg(M'') \} \\
\reg(M') &\le \max \{ \reg(M), \reg(M'')+1 \} \\
\reg(M'') &\le \max \{ \reg(M), \reg(M')-1 \}.
\end{align*} Moreover, if $M'$ has finite length, then $$\reg(M) = \max \{ \reg(M'), \reg(M'') \}.$$ 
\end{proposition}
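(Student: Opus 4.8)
The plan is to argue entirely through the local cohomology characterization $\reg(M)=\max\{i+j:\ [H_{\mf}^i(M)]_j\neq 0\}$. Since the functors $H_{\mf}^i(-)$ form a cohomological $\delta$-functor, the given short exact sequence produces a long exact sequence of graded modules with degree-preserving maps
$$\cdots\to H_{\mf}^i(M')\to H_{\mf}^i(M)\to H_{\mf}^i(M'')\xrightarrow{\delta} H_{\mf}^{i+1}(M')\to\cdots,$$
and taking the degree-$j$ part yields, for each $j\in\mathbb{Z}$, an exact sequence of $k$-vector spaces. The three inequalities then reduce to the trivial remark that in an exact sequence $A\to B\to C$ of vector spaces, $B\neq 0$ forces $A\neq 0$ or $C\neq 0$ (pick $0\neq b\in B$; if its image in $C$ is nonzero we are done, otherwise $b$ lies in the image of $A$, so $A\neq 0$).

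Concretely: if $[H_{\mf}^i(M)]_j\neq 0$, the strand $[H_{\mf}^i(M')]_j\to [H_{\mf}^i(M)]_j\to [H_{\mf}^i(M'')]_j$ gives $[H_{\mf}^i(M')]_j\neq 0$ or $[H_{\mf}^i(M'')]_j\neq 0$, hence $i+j\le\max\{\reg(M'),\reg(M'')\}$, which is the first inequality. If $[H_{\mf}^i(M')]_j\neq 0$, the strand $[H_{\mf}^{i-1}(M'')]_j\xrightarrow{\delta}[H_{\mf}^i(M')]_j\to [H_{\mf}^i(M)]_j$ gives $i+j\le\reg(M)$ or $(i-1)+j\le\reg(M'')$, which is the second inequality. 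If $[H_{\mf}^i(M'')]_j\neq 0$, the strand $[H_{\mf}^i(M)]_j\to [H_{\mf}^i(M'')]_j\xrightarrow{\delta}[H_{\mf}^{i+1}(M')]_j$ gives $i+j\le\reg(M)$ or $(i+1)+j\le\reg(M')$, which is the third.

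For the final statement, recall that a finite-length module $M'$ satisfies $H_{\mf}^i(M')=0$ for $i\neq 0$ and $H_{\mf}^0(M')=M'$, so $\reg(M')=\max\{j:[M']_j\neq 0\}$, and the long exact sequence collapses to a short exact sequence $0\to M'\to H_{\mf}^0(M)\to H_{\mf}^0(M'')\to 0$ together with isomorphisms $H_{\mf}^i(M)\cong H_{\mf}^i(M'')$ for all $i\ge 1$ (here left-exactness of $H^0_{\mf}(-)$ is used for the injectivity of $M'\to H^0_{\mf}(M)$). The injection $M'\hookrightarrow H_{\mf}^0(M)$ gives $\reg(M')\le\reg(M)$, while the surjection $H_{\mf}^0(M)\twoheadrightarrow H_{\mf}^0(M'')$ together with the positive-degree isomorphisms gives $\reg(M'')\le\reg(M)$; combined with the first inequality this forces $\reg(M)=\max\{\reg(M'),\reg(M'')\}$. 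None of the steps is a genuine obstacle — the statement is classical, see \cite{Eisenbud:CA,peeva2010graded} — the only point deserving attention being this reduction in the equality case to a short exact sequence plus isomorphisms, which rules out any cancellation of cohomology that could otherwise lower the regularity of $M$ below that of $M'$ or $M''$.
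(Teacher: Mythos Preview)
The paper does not supply its own proof of this proposition; it is quoted as the well-known ``regularity lemma'' with references to \cite{Eisenbud:CA,bruns1998cohen,peeva2010graded}. Your argument via the long exact sequence in local cohomology is the standard one and is correct, including the finite-length refinement where the vanishing $H_{\mf}^i(M')=0$ for $i\ge 1$ collapses the long exact sequence and yields the two reverse inequalities $\reg(M')\le\reg(M)$ and $\reg(M'')\le\reg(M)$.
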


A linear form $x \in [S]_1$ is called almost regular on $M$, if it is a non-zero divisor on $M / H_{\mathfrak{m}}^0(M)$. When $M = S/J$ for some homogeneous ideal $J$, this condition translates to $x$ being regular on $S/ J^{\sat}$; here $J^{\sat} = J : \mathfrak{m}^{\infty}$ is the saturation of $J$ with respect to $\mathfrak{m}$. We recall also that the saturation index $\sat(J)$ of $J$ is the smallest integer $s$ for which $[J^{\sat}]_\nu = [J]_\nu$ for every $\nu \ge s$, i.e. $\sat(J) = \reg(J^{\sat}/J)+1$. With that, the behavior of the regularity upon taking quotients by almost regular elements is extremely useful for inductive arguments on the dimension:

\begin{proposition}\label{prp:almost regular}
Let $J$ be a homogeneous ideal of $S$ and $x \in S$ a linear form almost regular on $S/J$. Then $$\reg(J) = \max\{\reg(J+(x)), \, \sat(J) \},$$ or equivalently 
$$\reg(S/J) = \max\{\reg(S/(J+(x))), \, \reg(J^{\sat}/J) \}.$$
\end{proposition}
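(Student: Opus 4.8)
The plan is to reduce the statement to a comparison of the local cohomology modules of $S/J$ with those of $S/J^{\sat}$ and of $S/(J+(x))$, and to exploit that $x$ is a non-zero divisor precisely on $S/J^{\sat}$. Write $M = S/J$, let $M_0 = H^0_{\mathfrak m}(M)$, which is exactly $J^{\sat}/J$, and let $\overline M = M/M_0 = S/J^{\sat}$. Since $M_0$ has finite length, the last assertion of Proposition~\ref{prp:regularity lemma} applied to $0 \to M_0 \to M \to \overline M \to 0$ gives $\reg(M) = \max\{\reg(M_0), \reg(\overline M)\}$, and $\reg(M_0) = \reg(J^{\sat}/J)$ because a finite-length module's regularity is read off its Hilbert function; this already isolates the "$\sat(J)$'' term. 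So the real content is to show $\reg(\overline M) \le \reg(S/(J+(x)))$, together with the reverse inequality $\reg(S/(J+(x))) \le \reg(M)$ coming from the first line of the regularity lemma applied to multiplication by $x$.

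For the key inequality I would use that $x$ is regular on $\overline M = S/J^{\sat}$, so there is a short exact sequence
\[
0 \longrightarrow \overline M(-1) \xrightarrow{\;\cdot x\;} \overline M \longrightarrow \overline M/x\overline M \longrightarrow 0,
\]
and it is a standard fact that for a module $N$ on which a linear form $x$ is a non-zero divisor one has $\reg(N) = \reg(N/xN)$. Here $\overline M/x\overline M = S/(J^{\sat}+(x))$, which in general differs from $S/(J+(x))$; however the two differ only by a finite-length submodule, namely $(J^{\sat}+(x))/(J+(x))$, which is a quotient of $J^{\sat}/J$ and hence of finite length. Applying the regularity lemma to the short exact sequence relating $S/(J+(x))$, $S/(J^{\sat}+(x))$, and this finite-length module, one extracts $\reg(S/(J^{\sat}+(x))) \le \reg(S/(J+(x)))$ — and in fact, since the finite length term is controlled by $\reg(J^{\sat}/J) \le \reg(M)$, the maximum formula will fall out cleanly. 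Chaining $\reg(\overline M) = \reg(S/(J^{\sat}+(x))) \le \reg(S/(J+(x)))$ with the bound from the first paragraph yields $\reg(M) \le \max\{\reg(S/(J+(x))),\ \reg(J^{\sat}/J)\}$.

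For the reverse inequality, multiplication by $x$ on $M$ gives $0 \to \ker \to M(-1) \xrightarrow{\cdot x} M \to S/(J+(x)) \to 0$; breaking this into two short exact sequences and using that $\ker$ is supported on $S/M_0$-torsion so is annihilated by a power of $\mathfrak m$ only after passing to $\overline M$ (more precisely $\ker \subseteq M_0(-1)$ since $x$ is almost regular), one gets $\reg(S/(J+(x))) \le \max\{\reg(M),\ \reg(M_0)+1\} = \max\{\reg(M),\ \reg(J^{\sat}/J)+1\}$. Combined with $\reg(J^{\sat}/J)+1 = \sat(J) \le \reg(J) = \reg(M) + $ (a shift) one sees both sides agree; translating everything back through the identity $\reg(J) = \reg(S/J)+1$ (and likewise for $J+(x)$) gives the two displayed forms. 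The main obstacle I anticipate is the bookkeeping around the discrepancy between $J^{\sat}+(x)$ and $(J+(x))^{\sat}$, i.e. making sure the finite-length correction terms are bounded by $\reg(J^{\sat}/J)$ and never by something larger; this is where the hypothesis that $x$ is \emph{almost} regular (not merely that it cuts down the dimension) is essential, and it needs to be used carefully rather than invoked as a black box.
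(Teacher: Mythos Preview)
The paper does not actually prove this proposition; it simply records it as folklore and refers to \cite{conca2003castelnuovo}. So there is no ``paper's own proof'' to compare against, and your outline is in fact the standard argument one finds in that reference: split off $M_0=H^0_{\mathfrak m}(S/J)=J^{\sat}/J$, use that $x$ is genuinely regular on $\overline M=S/J^{\sat}$ so that $\reg(\overline M)=\reg(\overline M/x\overline M)$, and then control the finite-length discrepancy $(J^{\sat}+(x))/(J+(x))$.

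Your forward direction is fine. The only wobble is in the reverse inequality: from the four-term sequence $0\to K\to M(-1)\xrightarrow{x} M\to S/(J+(x))\to 0$ with $K\subseteq M_0(-1)$, a careful chase through the regularity lemma gives
\[
\reg\bigl(S/(J+(x))\bigr)\le\max\{\reg(M),\,\reg(M_0)-1\}=\reg(M),
\]
not $\max\{\reg(M),\reg(M_0)+1\}$ as you wrote. Your looser bound would not close the argument when $\reg(M_0)=\reg(M)$ (e.g.\ $\dim S/J=0$). In fact you can avoid this four-term sequence entirely: you already obtained, via the finite-length clause of Proposition~\ref{prp:regularity lemma}, the \emph{equality}
\[
\reg\bigl(S/(J+(x))\bigr)=\max\bigl\{\reg\bigl((J^{\sat}+(x))/(J+(x))\bigr),\ \reg\bigl(S/(J^{\sat}+(x))\bigr)\bigr\},
\]
and both terms on the right are $\le\reg(M)$ (the first is a quotient of the finite-length module $M_0$, the second equals $\reg(\overline M)$). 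This gives $\reg(S/(J+(x)))\le\reg(M)$ directly and finishes the proof without the extra bookkeeping you were worried about.
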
 \noindent For a proof of the above folklore fact we refer to \cite{conca2003castelnuovo}.

\subsection{Subspace Arrangements}

Given an arrangement of $n$ $k$-vector subspaces $V_i, \, i \in [n]$, of $[S]_1$, and the ideals $I_i$ that the $V_i$'s generate, we may consider the intersection $\cap_{i \in [n]} I_i$ or the product  $\prod_{i \in [n]} I_i$ of the $I_i$'s. The intersection ideal is more geometric, because it is the radical ideal that defines the schematic union $X$ of the corresponding arrangement of linear spaces $X_i = \Proj (S/I_i)$ of $\PP_k^{r-1}$. But it is a much harder object to understand than the product ideal $\prod_{i \in [n]} I_i$; already describing a set of generators of $\cap_{i \in [n]} I_i$ is a difficult task \cite{ida1990homogeneous}. On the other hand, many properties of $\prod_{i \in [n]} I_i$ are by now well understood, including a complete description of its minimal graded free resolution and of a minimal primary decomposition \cite{conca2019resolution}. We will keep the notation of this paragraph for the rest of the paper.

\begin{definition} \label{dfn:linearly-general}
We call the subspace arrangement $V_1,\dots,V_n$ of $[S]_1$ linearly general, if for every $A \subseteq [n]$ we have
$$\dim_k \sum_{i \in A} V_i = \min\left\{r, \, {\sum}_{i \in A} \dim_k V_i\right\}.$$ We call the subspace arrangement $X_1, \dots, X_n$ of $\mathbb{P}_k^{r-1}$ linearly general, if the $V_i$'s are linearly general.
\end{definition}

The following result allows valuable access from the product ideal to the intersection ideal. 

\begin{proposition}[Conca \& Herzog, \cite{conca2003castelnuovo}] \label{prp:Conca-Herzog}
Suppose that the subspace arrangement $V_i, \, i \in [n]$, of $[S]_1$ is linearly general. Then for every $\nu \ge n$ we have $$\textstyle [\cap_{i \in [n]} I_i]_\nu = \big[\prod_{i \in [n]} I_i\big]_\nu.$$
\end{proposition}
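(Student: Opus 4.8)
\emph{Proof plan.} The inclusion $\prod_{i\in[n]}I_i\subseteq\bigcap_{i\in[n]}I_i$ holds without any hypothesis, so only the reverse inclusion in degrees $\nu\ge n$ is at issue. The plan is to induct on $n$, the case $n=1$ being trivial. For the inductive step set $J=\prod_{i=1}^{n-1}I_i$ and $K=\bigcap_{i=1}^{n-1}I_i$. Since $V_1,\dots,V_{n-1}$ is again linearly general, the inductive hypothesis gives $[J]_\nu=[K]_\nu$ for all $\nu\ge n-1$; intersecting with $[I_n]_\nu$ yields $[J\cap I_n]_\nu=[K\cap I_n]_\nu=\big[\bigcap_{i=1}^nI_i\big]_\nu$ for every $\nu\ge n-1$. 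As $\prod_{i=1}^nI_i=JI_n\subseteq J\cap I_n$, it therefore suffices to prove that $[J\cap I_n]_\nu=[JI_n]_\nu$ for $\nu\ge n$, equivalently, via the canonical graded isomorphism $(J\cap I_n)/(JI_n)\cong\Tor_1^S(S/J,S/I_n)$, that $\big[\Tor_1^S(S/J,S/I_n)\big]_\nu=0$ for all $\nu\ge n$.

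To control this $\Tor$ I would use two facts. First, the product $J$ of ideals generated by linear forms has a linear resolution (see \cite{conca2003castelnuovo,conca2019resolution}); hence $\reg(J)=n-1$ and, by Proposition \ref{prp:regularity lemma}, $\reg(S/J)=n-2$, and combined with $[J]_\nu=[K]_\nu$ for $\nu\ge n-1$ this forces $J^{\sat}=K=\bigcap_{i=1}^{n-1}I_i$, so that the finite-length module $H^0_{\mathfrak m}(S/J)=K/J$ is concentrated in degrees $\le n-2$. Second, by linear generality the $c:=\dim_kV_n$ linear forms spanning $V_n$ are linearly independent, hence form a regular sequence; so $S/I_n$ is resolved by the Koszul complex on a generically chosen basis $\ell_1,\dots,\ell_c$ of $V_n$ (legitimate since the assertion only involves $I_n$), and therefore $\Tor_i^S(S/J,S/I_n)\cong H_i(\ell_1,\dots,\ell_c;\,S/J)$.

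The heart of the argument is then to show that the Koszul homology $H_1(\ell_1,\dots,\ell_c;\,S/J)$ is concentrated in degrees $\le n-1$, and I would do this by adjoining the $\ell_j$ one at a time. Write $J_j=J+(\ell_1,\dots,\ell_j)$ and $H^{(j)}=H_1(\ell_1,\dots,\ell_j;\,S/J)$. The crucial point is that each $\ell_{j+1}$ is almost regular on $S/J_j$: indeed $J_j$ and $K+(\ell_1,\dots,\ell_j)$ agree in degrees $\ge n-1$ and so have the same saturation, and the associated primes of that saturation are the linear primes $I_i+(\ell_1,\dots,\ell_j)$ attached to the linear spaces $X_i\cap V(\ell_1)\cap\dots\cap V(\ell_j)$ (discarding those that equal $\mathfrak m$), each of which fails to contain $V_n$ by linear generality of $V_1,\dots,V_n$. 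Proposition \ref{prp:almost regular}, applied repeatedly, then gives $\reg(S/J_j)\le\reg(S/J)=n-2$ for every $j$, so the submodule $(0:_{S/J_j}\ell_{j+1})$ of $H^0_{\mathfrak m}(S/J_j)$ lives in degrees $\le n-2$. The Koszul long exact sequence for adjoining $\ell_{j+1}$ produces a short exact sequence
\[
0\longrightarrow H^{(j)}/\ell_{j+1}H^{(j)}\longrightarrow H^{(j+1)}\longrightarrow\big(0:_{S/J_j}\ell_{j+1}\big)(-1)\longrightarrow0,
\]
whose outer terms are concentrated in degrees $\le n-1$ as soon as $H^{(j)}$ is; starting from $H^{(0)}=0$ and $H^{(1)}=(0:_{S/J}\ell_1)(-1)\subseteq(K/J)(-1)$, an induction on $j$ finishes the proof. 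Hence $\big[\Tor_1^S(S/J,S/I_n)\big]_\nu=0$ for $\nu\ge n$, which closes the induction on $n$.

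The main obstacle is this last step, and within it the verification that a generic linear form stays almost regular as the $\ell_j$ are adjoined: one has to pin down the associated primes of the saturation of $J+(\ell_1,\dots,\ell_j)$ precisely enough --- in particular rule out unexpected embedded primes created by the iterated hyperplane sections --- and then translate linear generality of the whole arrangement into the statement that a generic element of $V_n$ avoids each of them. The accompanying degree bookkeeping, which relies on Propositions \ref{prp:regularity lemma} and \ref{prp:almost regular} to keep every finite-length correction module in degrees $<n$, must likewise be carried out with care.
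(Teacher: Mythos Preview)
The paper does not prove this proposition; it is quoted as a known result from Conca and Herzog \cite{conca2003castelnuovo}, so there is no ``paper's own proof'' to compare against. The original argument in \cite{conca2003castelnuovo} proceeds quite differently from yours: they first establish an explicit primary decomposition of the product $\prod_i I_i$ in terms of powers of the partial sums $\sum_{i\in A}I_i$, from which both $\reg(\prod_i I_i)=n$ and the coincidence with $\bigcap_i I_i$ in degrees $\ge n$ can be read off directly.

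Your inductive strategy via Koszul homology is a reasonable alternative, and most of the bookkeeping is correct: the identification $(J\cap I_n)/JI_n\cong\Tor_1^S(S/J,S/I_n)\cong H_1(\underline{\ell};S/J)$, the short exact sequence from the Koszul long exact sequence, and the degree bounds coming from $\reg(S/J)=n-2$ all go through as you describe. The gap is exactly where you flag it. For the induction on $j$ to work you need a generic element of $V_n$ to be almost regular on $S/J_j$, i.e.\ you need $V_n\not\subseteq P$ for every associated prime $P\neq\mathfrak m$ of $S/J_j$. For the \emph{minimal} primes $I_i+(\ell_1,\dots,\ell_j)$ this follows from linear generality (your dimension count is fine), but you have not ruled out an embedded prime of $J_j=J+(\ell_1,\dots,\ell_j)$ that happens to contain all of $V_n$. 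Your appeal to the saturation being $\bigcap_i\big(I_i+(\ell_1,\dots,\ell_j)\big)$ is essentially the content of Lemma~\ref{lem:saturation}, but that lemma is proved for a \emph{generic} linear form in $[S]_1$, whereas here $\ell_1,\dots,\ell_j$ are constrained to lie in $V_n$; the almost-regularity of these forms on the cokernel $C$ used in that proof is not automatic. So as written the argument is a plausible plan with a genuine hole, and closing it would amount to proving a version of Lemma~\ref{lem:saturation} for linear forms chosen generically inside $V_n$ rather than inside $[S]_1$. The primary-decomposition route in \cite{conca2003castelnuovo} bypasses this difficulty entirely.
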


Using a formula for the primary decomposition that they developed, 
Conca \& Herzog \cite{conca2003castelnuovo}  proved that $$\textstyle \reg \big(\prod_{i \in [n]} I_i \big)=n,$$ while Derksen \& Sidman proved:

\begin{proposition}[Derksen \& Sidman, \cite{derksen2002sharp}] \label{prp:Derksen-Sidman}
 $$\reg ( \cap_{i \in [n]} I_i) \le n.$$
\end{proposition}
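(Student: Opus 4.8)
The plan is to leverage Proposition~\ref{prp:Conca-Herzog} together with the equality $\reg\big(\prod_{i\in[n]}I_i\big)=n$ of Conca and Herzog \cite{conca2003castelnuovo}, so as to reduce the bound to a single application of the regularity lemma. This route is clean but uses linear generality of the arrangement, which is the only case needed in the sequel; at the end I indicate how the statement for an arbitrary reduced union of linear spaces is instead obtained by an induction on $n$.

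Write $P=\prod_{i\in[n]}I_i$ and $J=\bigcap_{i\in[n]}I_i$. First, since $P\subseteq I_j$ for every $j$ we have $P\subseteq J$, so there is a short exact sequence
$$0\longrightarrow P\longrightarrow J\longrightarrow J/P\longrightarrow 0$$
of finitely generated graded $S$-modules. Next, Proposition~\ref{prp:Conca-Herzog} gives $[J]_\nu=[P]_\nu$ for all $\nu\ge n$, so $J/P$ vanishes in degrees $\ge n$ and is therefore a module of finite length; consequently $\reg(J/P)=\max\{j:[J/P]_j\neq 0\}\le n-1$. Finally, feeding $\reg(P)=n$ and this last bound into the regularity lemma (Proposition~\ref{prp:regularity lemma}) applied to the displayed sequence yields
$$\reg(J)\le\max\{\reg(P),\,\reg(J/P)\}\le\max\{n,\,n-1\}=n.$$
(Equivalently, $J$ is radical, hence saturated, and agrees with $P$ in large degrees, so $J=P^{\sat}$, and the estimate reads $\reg(P^{\sat})\le n$.)

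Along this route all the difficulty has been outsourced to the two cited Conca--Herzog statements, and the remaining verifications are immediate; if one wants a self-contained proof for an \emph{arbitrary} reduced union $Y=X_1\cup\cdots\cup X_n$ of linear spaces, I would instead induct on $n$ through the Mayer--Vietoris sequence
$$0\longrightarrow I_Y\longrightarrow I_{Y'}\oplus I_{X_n}\longrightarrow I_{Y'}+I_{X_n}\longrightarrow 0,\qquad Y'=X_1\cup\cdots\cup X_{n-1}.$$
Here $\reg(I_{X_n})=1$ and $\reg(I_{Y'})\le n-1$ by induction, so by the regularity lemma everything reduces to bounding $\reg(I_{Y'}+I_{X_n})$. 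The saturation of $I_{Y'}+I_{X_n}$ is the ideal of $Y'\cap X_n$, a union of at most $n-1$ linear spaces lying in $X_n$, whose regularity is $\le n-1$ by the inductive hypothesis applied within $X_n$; the step I expect to be the main obstacle is to show that the saturation index $\sat(I_{Y'}+I_{X_n})$ is also at most $n$. Granting that, the standard relation $\reg(I)=\max\{\sat(I),\,\reg(I^{\sat})\}$ gives $\reg(I_{Y'}+I_{X_n})\le n$, and the regularity lemma then closes the induction.
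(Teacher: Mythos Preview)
The paper does not include a proof of this proposition; it is quoted as a result of \cite{derksen2002sharp}. So there is no in-paper argument to compare against.

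Your first argument is correct for linearly general arrangements: the quotient $J/P$ is indeed of finite length (being zero in degrees $\ge n$ by Proposition~\ref{prp:Conca-Herzog}), and the regularity lemma applied to $0\to P\to J\to J/P\to 0$ with $\reg(P)=n$ gives the bound. The parenthetical reformulation via $J=P^{\sat}$ is also correct. This is a genuinely different route from the original Derksen--Sidman proof, which proceeds by induction on $n$ via Mayer--Vietoris (your second sketch) and does not invoke the product ideal at all. Your approach trades the general hypothesis for a much shorter argument that offloads all the work to the two Conca--Herzog statements; as you note, the linearly general case is the only one used later in the paper.

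Your second sketch correctly identifies the structure of the Derksen--Sidman induction and correctly names the crux: the bound $\sat(I_{Y'}+I_{X_n})\le n$ is precisely the nontrivial step in \cite{derksen2002sharp}, and you have not supplied it. So the general statement remains unproved in your proposal; what you have is a complete proof of the case actually needed, together with an accurate outline of how the full result is obtained in the cited reference.
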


Later on, Derksen gave a formula for the Hilbert polynomial and proved that it agrees with the Hilbert function from degrees $n$ and on:

\begin{proposition}[Derksen, \cite{derksen2007hilbert}] \label{prp:Derksen}
Suppose that the subspace arrangement $V_i, \, i \in [n]$, of $[S]_1$ is linearly general. For any subset $A \subseteq [n]$ set $d_A = \dim_k \sum_{i \in A} V_i$. Then for every $\nu \ge n$ we have 
$$\HF\big(S/\cap_{i \in [n]} I_i,\nu \big) = \sum_{\emptyset \neq A \subseteq [n]} (-1)^{|A|+1} {\nu+r-1-d_A \choose r-1-d_A},$$ where all terms with $r-1-d_A<0$ are zero by convention.
\end{proposition}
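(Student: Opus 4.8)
The plan is to prove the formula by induction on $n$, carried out uniformly over the ambient polynomial ring, with the Mayer--Vietoris exact sequence and Proposition~\ref{prp:Conca-Herzog} as the only real tools. First I would isolate a purely combinatorial ``inclusion--exclusion'' statement from the computation of the binomials. For a nonempty $A\subseteq[n]$ put $I_A=\sum_{i\in A}I_i$; since the $V_i$ are linearly general, $I_A$ is generated by a regular sequence of $d_A$ linear forms, so $S/I_A$ is a polynomial ring in $r-d_A$ variables (or equals $k$, when $d_A=r$), and therefore $\HF(S/I_A,\nu)=\binom{\nu+r-1-d_A}{r-1-d_A}$ for every $\nu\ge 1$, the stated convention on binomials being exactly what accounts for the stratum $d_A=r$ (where $S/I_A=k$) once $\nu\ge1$. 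Thus it suffices to prove the \emph{inclusion--exclusion identity}
$$\HF\big(S/\cap_{i\in[n]}I_i,\,\nu\big)=\sum_{\emptyset\neq A\subseteq[n]}(-1)^{\#A+1}\,\HF(S/I_A,\nu)\qquad\text{for }\nu\ge n-1,$$
from which the asserted formula follows for $\nu\ge n$ (indeed for $\nu\ge n-1$ when $n\ge2$).

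For the inductive step, set $J=\cap_{i\in[n-1]}I_i$, so that $\cap_{i\in[n]}I_i=J\cap I_n$. The Mayer--Vietoris sequence
$$0\longrightarrow S/(J\cap I_n)\longrightarrow S/J\oplus S/I_n\longrightarrow S/(J+I_n)\longrightarrow 0$$
is exact with graded degree-zero maps, so $\HF(S/(J\cap I_n),\nu)=\HF(S/J,\nu)+\HF(S/I_n,\nu)-\HF(S/(J+I_n),\nu)$ for all $\nu$. By the inductive hypothesis applied to the linearly general arrangement $X_1,\dots,X_{n-1}$ in $\Proj S$, the first summand is $\sum_{\emptyset\neq A\subseteq[n-1]}(-1)^{\#A+1}\HF(S/I_A,\nu)$ for $\nu\ge n-2$; the second summand is $\HF(S/I_{\{n\}},\nu)$. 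The whole point is the third summand.

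To handle it I would pass to $\bar S=S/I_n$, writing $\bar I_i$ for the image of $I_i$ in $\bar S$. One checks readily that $\bar I_1,\dots,\bar I_{n-1}$ is again a linearly general arrangement in $\bar S$ (the defining dimension equalities are inherited from those of the $V_i$), and that $\bar S/\sum_{i\in A}\bar I_i\cong S/I_{A\cup\{n\}}$ for every $A\subseteq[n-1]$. The key claim is that the image of $[J]_\nu$ in $\bar S$ equals $[\cap_{i\in[n-1]}\bar I_i]_\nu$ for all $\nu\ge n-1$: indeed, Proposition~\ref{prp:Conca-Herzog} in $S$ gives $[J]_\nu=[\prod_{i\in[n-1]}I_i]_\nu$, whose image in $\bar S$ is $[\prod_{i\in[n-1]}\bar I_i]_\nu$, while the same proposition in $\bar S$ (its proof works over a polynomial ring of any dimension) gives $[\prod_{i\in[n-1]}\bar I_i]_\nu=[\cap_{i\in[n-1]}\bar I_i]_\nu$. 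Hence $\HF(S/(J+I_n),\nu)=\HF(\bar S/\cap_{i\in[n-1]}\bar I_i,\nu)$ for $\nu\ge n-1$, and the inductive hypothesis \emph{inside} $\Proj\bar S$ evaluates the latter to $\sum_{\emptyset\neq A\subseteq[n-1]}(-1)^{\#A+1}\HF(S/I_{A\cup\{n\}},\nu)$ for $\nu\ge n-2$.

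Plugging the three values into the Mayer--Vietoris identity and regrouping the subsets $A\subseteq[n]$ into those omitting $n$ (the $S/J$ term), the singleton $\{n\}$ (the $S/I_n$ term), and those containing $n$ of cardinality $\ge2$ (the $S/(J+I_n)$ term, reindexed by $A\mapsto A\cup\{n\}$, under which $-(-1)^{\#A+1}=(-1)^{\#(A\cup\{n\})+1}$) produces precisely $\sum_{\emptyset\neq A\subseteq[n]}(-1)^{\#A+1}\HF(S/I_A,\nu)$ for $\nu\ge n-1$, completing the induction; the base case $n=1$ is the tautology $\HF(S/I_1,\nu)=\HF(S/I_{\{1\}},\nu)$, valid in a polynomial ring of any dimension. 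The one genuinely non-formal step is the identity $\HF(S/(J+I_n),\nu)=\HF(\bar S/\cap_{i\in[n-1]}\bar I_i,\nu)$ for $\nu\ge n-1$: the evident inclusion $J+I_n\subseteq\cap_{i\in[n-1]}(I_i+I_n)$ is typically strict, and it is exactly the genericity hypothesis---entering through the two invocations of Conca--Herzog---that forces the Hilbert functions to agree in this range. Everything else is bookkeeping with binomials and with alternating sums over subsets; examples of generic lines in $\PP^3$ show that the range $\nu\ge n-1$ is optimal, so the stated bound $\nu\ge n$ is not tight.
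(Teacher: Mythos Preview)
The paper does not supply a proof of Proposition~\ref{prp:Derksen}; it is quoted from \cite{derksen2007hilbert} as background. So there is no ``paper's own proof'' to compare against.

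That said, your argument is correct. The inductive Mayer--Vietoris scheme, with Proposition~\ref{prp:Conca-Herzog} used twice (once in $S$ and once in $\bar S$) to identify the degree-$\nu$ piece of $J+I_n$ with that of $\cap_{i\in[n-1]}\bar I_i$, is exactly the mechanism the paper itself exploits in the proof of Lemma~\ref{lem:HF at n-1}; there the authors invoke Derksen's formula for $n-1$ ideals rather than unwind it inductively, but the analytic content is the same. Your verification that the $\bar I_i$ remain linearly general in $\bar S$ is routine (and true), and the bookkeeping reassembling the three Mayer--Vietoris terms into the full alternating sum over $A\subseteq[n]$ is correct. You also obtain the slightly sharper range $\nu\ge n-1$ (for $n\ge2$), which matches how Lemma~\ref{lem:HF at n-1} accesses degree $\ell-1$ for $\ell$ ideals.

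One small clarification worth making explicit: the step ``the image of $[\prod_{i\in[n-1]}I_i]_\nu$ in $\bar S$ is $[\prod_{i\in[n-1]}\bar I_i]_\nu$'' is immediate because reduction modulo $I_n$ is a ring homomorphism sending $I_i$ onto $\bar I_i$, hence sending the product ideal onto the product of the images; you use this implicitly and it is where the argument genuinely needs the product rather than the intersection.
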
 

\begin{remark}
The notion of a linearly general subspace arrangement in projective space given in Definition \ref{dfn:linearly-general} is equivalent to the notion of a transversal subspace arrangement as defined in \cite{derksen2007hilbert}.
\end{remark}


\section{Proof of Theorem \ref{thm:codim2}} \label{section:proof-Thm-codim2}


\subsection{First Hilbert Function Lemma} \label{section:1st-HF}

We begin by giving a formula for the Hilbert function at degree $\ell-1$ of a linearly general subspace arrangement of $\ell$ codimension-$2$ linear spaces in $\mathbb{P}_k^{r-1}$.  

\begin{lemma} \label{lem:HF at n-1}
Suppose that the subspace arrangement $V_i, \, i \in [\ell]$, of $[S]_1$ is linearly general and $\dim_k V_i=2$ for every $i \in [\ell]$. Then 
$$\HF(\cap_{i \in [\ell]} I_i, \ell-1) = \sum_{j \ge 0}(-1)^{j}  {\ell \choose j}   {\ell+r-2-2j \choose \ell-1}.$$
\end{lemma}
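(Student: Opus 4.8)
The plan is to compute $\HF(\cap_{i \in [\ell]} I_i, \ell-1)$ via inclusion–exclusion applied to the *product* ideal, after first reducing from the intersection to the product. The natural degree at which Proposition~\ref{prp:Conca-Herzog} guarantees $[\cap_i I_i]_\nu = [\prod_i I_i]_\nu$ is $\nu \ge \ell$, which is one degree too high for what we want — so the first order of business is to establish that the equality already holds in degree $\ell-1$. One way to do this: note that for a linearly general arrangement the product ideal $\prod_i I_i$ is generated in degree $\ell$ (each $I_i$ being generated by linear forms, and the product by products of $\ell$ such forms), hence $[\prod_i I_i]_{\ell-1} = 0$; on the other hand, for $\ell$ codimension-$2$ planes in linearly general position the intersection ideal also has no forms in degree $\ell-1$ when the count works out — more precisely, the formula to be proved should evaluate, and one should check separately, to $\HF(S/\cap_i I_i, \ell-1) = \HF(S,\ell-1) = \binom{\ell+r-2}{r-1}$, i.e. $\cap_i I_i$ is zero in that degree. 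So the real content is: \emph{show $[\cap_{i\in[\ell]} I_i]_{\ell-1} = 0$}, equivalently that $\ell$ generic codimension-$2$ planes in $\mathbb{P}^{r-1}$ impose independent (in fact, no) conditions up to degree $\ell-1$ in the relevant sense, and then the stated alternating sum is just the inclusion–exclusion expansion of $\HF(S/\prod_i I_i, \ell-1)$ rewritten.

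Concretely, I would run inclusion–exclusion on the Hilbert function of $S/\prod_{i\in[\ell]} I_i$. Writing $\prod_i I_i$ and using that for a linearly general arrangement $\sum_{i \in A} V_i$ has dimension $\min\{r, 2\#A\}$, one gets $\HF(\prod_{i\in A} I_i, \nu)$ controlled by the codimension $2\#A$ ideal generated by $2\#A$ generic linear forms (for $2\#A \le r$), i.e. a complete intersection whose quotient has Hilbert function $\binom{\nu + r - 1 - 2\#A}{r-1-2\#A}$ shifted appropriately; combining the $\binom{\ell}{j}$ subsets $A$ of size $j$ gives exactly the sum $\sum_{j\ge 0}(-1)^j \binom{\ell}{j}\binom{\ell+r-2-2j}{\ell-1}$ once one rewrites $\binom{\nu+r-1-2j}{r-1-2j}$ with $\nu = \ell-1$ as $\binom{\ell+r-2-2j}{\ell-1}$. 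The bookkeeping here is the Conca–Herzog primary decomposition machinery (or a direct Koszul/Mayer–Vietoris argument on the product ideal), and the terms with $2j > r$, i.e. $r-1-2j < 0$, drop out by the binomial convention — consistent with the stated formula where $j$ ranges over all $j\ge 0$ but high terms vanish.

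The step I expect to be the main obstacle is the reduction from $\cap$ to $\prod$ in degree $\ell-1$: Proposition~\ref{prp:Conca-Herzog} is stated only for $\nu \ge \ell$, and in general $\cap_i I_i$ can be strictly larger than $\prod_i I_i$ in lower degrees. The saving grace specific to this lemma is the codimension-$2$, linearly-general hypothesis together with the degree being exactly one below the threshold — I would argue that a form of degree $\ell-1$ vanishing on all $\ell$ planes would force, by genericity and a dimension count (each codimension-$2$ plane cuts down the space of degree-$\ell-1$ forms by the Hilbert function of a codimension-$2$ complete intersection, and these conditions stay independent as long as the total codimension $2\ell$ exceeds what a degree-$(\ell-1)$ form can accommodate relative to $r$), that no such form exists, hence $[\cap_i I_i]_{\ell-1} = [\prod_i I_i]_{\ell-1} = 0$ and the inclusion–exclusion formula for the product computes the intersection. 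Alternatively, and perhaps more cleanly, one invokes Proposition~\ref{prp:flat degeneration}: degenerate the generic arrangement to a special linearly general one for which the degree-$(\ell-1)$ piece of the intersection is visibly zero, and transport the Hilbert function back. Either route requires care that the arrangement stays linearly general through the argument; that is the crux, and everything else is the inclusion–exclusion bookkeeping sketched above.
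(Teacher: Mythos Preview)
Your central claim—that $[\cap_{i\in[\ell]} I_i]_{\ell-1}=0$—is false, and the whole proposal rests on it. Take $r=3$, $\ell=2$: two generic points in $\mathbb{P}^2$ lie on a unique line, so $\HF(I_1\cap I_2,1)=1$, not $0$; the stated sum evaluates to $\binom{3}{1}-2\binom{1}{1}=1$, as it should. More generally, Lemma~\ref{lem:HF=1} (which is derived from the present lemma) asserts $\HF(\cap_{i\in[r-1]} I_i,\, r-2)=1$ for every $r$. So the intersection ideal genuinely has forms in degree $\ell-1$ that the product ideal (generated in degree $\ell$) does not, and Proposition~\ref{prp:Conca-Herzog} really cannot be pushed to degree $\ell-1$.

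What the paper does instead is a Mayer--Vietoris step that sidesteps this obstruction: peel off one subspace via the exact sequence
\[
0 \to \frac{S}{\cap_{i\in[\ell]} I_i} \to \frac{S}{\cap_{i\in[\ell-1]} I_i}\oplus\frac{S}{I_\ell} \to \frac{S}{I_\ell+\cap_{i\in[\ell-1]} I_i} \to 0.
\]
The middle term now involves only $\ell-1$ subspaces at degree $\ell-1$, so Proposition~\ref{prp:Derksen} applies directly. For the rightmost term, Proposition~\ref{prp:Conca-Herzog} (again with $\ell-1$ factors at degree $\ell-1$) lets one replace $\cap_{i\in[\ell-1]} I_i$ by $\prod_{i\in[\ell-1]} I_i$, whose image in $\bar S=S/I_\ell$ is the product of $\ell-1$ linearly general ideals in a polynomial ring of dimension $r-2$; Proposition~\ref{prp:Derksen} applies there too. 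A Pascal-identity combination of the two resulting alternating sums yields the claimed formula. The key idea you are missing is this reduction from $\ell$ subspaces to $\ell-1$ subspaces so that the existing results at degree $\nu=\ell-1$ become applicable.
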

\begin{proof}
Consider the short exact sequence
$$0 \rightarrow \frac{S}{\cap_{i \in [\ell]}I_i} \rightarrow \frac{S}{\cap_{i \in [\ell-1]}I_i} \oplus \frac{S}{I_{\ell}} \rightarrow \frac{S}{I_{\ell}+\cap_{i \in [\ell-1]}I_i} \rightarrow 0.$$ By Proposition \ref{prp:Derksen} 
\begin{align*}
&\HF\left(\frac{S}{\cap_{i \in [\ell-1]}I_i},\ell-1 \right)= \\
&\sum_{\emptyset \neq A \subseteq [\ell-1]} (-1)^{|A|+1} {\ell-1+r-1-2|A| \choose r-1 -2|A|} \\
&=  \sum_{j\ge 1} (-1)^{j+1} {\ell-1 \choose j} {\ell+r-2-2j \choose r-1 -2j}  \nonumber.
\end{align*} 

\noindent By Proposition \ref{prp:Conca-Herzog} $$ \left[\bigcap_{i \in [\ell-1]}I_i\right]_{\ell-1} = \left[\prod_{i \in [\ell-1]}I_i\right]_{\ell-1}.$$ Thus, with $\bar{S} = S/I_{\ell}$ and $\bar{I}_i = (I_i + I_{\ell}) / I_{\ell}$, we have $$\left[\frac{S}{I_{\ell}+\bigcap_{i \in [\ell-1]}I_i}\right]_{\ell-1} = \left[\frac{\bar{S}}{ {\prod}_{i \in [\ell-1]}\bar{I}_i}\right]_{\ell-1}.$$ Since the ideals $\bar{I}_i, \, i \in [\ell-1]$, are generated by a linearly general subspace arrangement of $\ell-1$ dimension-$2$ linear subspaces of $[\bar{S}]_1$, where $\bar{S}$ is a polynomial ring over $k$ in $r-2$ variables, we can compute the Hilbert function of the last term of the short exact sequence at degree $\ell-1$ again by Proposition \ref{prp:Derksen}: 
\begin{align*}
&\HF\left(\frac{\bar{S}}{\prod_{i \in [\ell-1]}\bar{I}_i},\ell-1 \right)= \\
& \sum_{\emptyset \neq A \subseteq [r-2]} (-1)^{|A|+1} {\ell-1+r-3-2|A| \choose r-3 -2|A|} \\
&=  \sum_{j \ge 1} (-1)^{j+1} {\ell-1 \choose j} {\ell+r-4-2j \choose r-3 -2j}  .
\end{align*} Putting everything together, we arrive at
\begin{align*}
&\HF(\cap_{i \in [\ell]} I_i,\ell-1 ) =\HF(\cap_{i \in [\ell-1]} I_i,\ell-1 ) - \HF(\textstyle{\prod_{i \in [\ell-1]}} \bar{I}_i,\ell-1)  \\
& =   \sum_{j \ge 0}(-1)^{j} {\ell-1 \choose j} {\ell+r-2-2j \choose r-1 -2j} \\
&- \sum_{j \ge 0} (-1)^{j} {\ell-1 \choose j} {\ell+r-4-2j \choose r-3 -2j}  \\
& =  \sum_{j \ge 0} (-1)^{j} {\ell-1 \choose j} {\ell+r-2-2j \choose r-1 -2j}  \\
&+ \sum_{j \ge 1} (-1)^{j} {\ell-1 \choose j-1} {\ell+r-2-2j \choose r-1 -2j}  \\
& =  \sum_{j \ge 0} (-1)^{j} \bigg[ {\ell-1 \choose j} +{\ell-1 \choose j-1}\bigg]  {\ell+r-2-2j \choose r-1 -2j}  \\
& =  \sum_{j \ge 0} (-1)^{j}  {\ell \choose j}   {\ell+r-2-2j \choose r-1 -2j}.  \qed 
\end{align*} \phantom\qedhere
\end{proof}

\subsection{Second Hilbert Function Lemma}

We recall a well-known binomial identity: 

\begin{lemma} \label{lem:binomial}
Let $p(t)$ be a polynomial in $t$ and $d>\deg(p)$. Then 
$$\sum_{j=0}^d (-1)^j {d \choose j} p(j)=0.$$
\end{lemma}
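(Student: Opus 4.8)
The plan is to deduce the identity from the calculus of finite differences. Introduce the forward difference operator $\Delta$, acting on functions $f$ of an integer variable with values in $k$ by $(\Delta f)(t)=f(t+1)-f(t)$, and set $\Delta^0=\mathrm{id}$. First I would prove by induction on $m$, using Pascal's rule $\binom{m+1}{j}=\binom{m}{j}+\binom{m}{j-1}$ in the inductive step, that
$$(\Delta^m f)(t)=\sum_{j=0}^m(-1)^{m-j}\binom{m}{j}f(t+j).$$
Taking $m=d$ and $t=0$ shows that the sum in the statement equals $(-1)^d(\Delta^d p)(0)$, so it remains only to check that $\Delta^d p=0$ as a polynomial.

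For this I would use the elementary fact that $\Delta$ strictly lowers degree: if $q$ is a nonzero polynomial of degree $e$ with leading term $a t^e$, then $q(t+1)-q(t)$ has degree $e-1$ (with leading term $ea\,t^{e-1}$) when $e\ge 1$, and equals the zero polynomial when $e=0$. Consequently, starting from $p$, which has degree at most $d-1$ by hypothesis, each of the $d$ successive applications of $\Delta$ either lowers the degree by one or has already produced the zero polynomial; after $d$ steps we necessarily reach the zero polynomial. Hence $(\Delta^d p)(0)=0$, which is the claimed identity.

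The argument is completely elementary and I do not anticipate a genuine obstacle; the only place calling for a little care is the base of the degree induction — a constant polynomial is sent by $\Delta$ to the zero polynomial, not to a ``degree $-1$'' polynomial — together with checking that it is precisely the strict inequality $d>\deg p$ that guarantees $d$ applications of $\Delta$ annihilate $p$. If one prefers to bypass the operator language, an equally short alternative is to invoke linearity to reduce to the case $p(t)=\binom{t}{b}$ with $0\le b<d$, which spans the space of polynomials of degree $<d$, and then compute directly
$$\sum_{j=0}^d(-1)^j\binom{d}{j}\binom{j}{b}=(-1)^b\binom{d}{b}\sum_{j=b}^d(-1)^{j-b}\binom{d-b}{j-b}=(-1)^b\binom{d}{b}(1-1)^{d-b}=0,$$
using the subset-of-a-subset identity $\binom{d}{j}\binom{j}{b}=\binom{d}{b}\binom{d-b}{j-b}$ and $d-b>0$.
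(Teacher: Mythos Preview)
Your argument is correct. The paper does not actually supply a proof of this lemma: it is introduced with the phrase ``We recall a well-known binomial identity'' and then simply stated, so there is no approach to compare against. Both routes you outline are standard and valid; the second one, via the basis $\left[\begin{smallmatrix} t\\ b\end{smallmatrix}\right]$ and the subset-of-a-subset identity, has the minor advantage that it avoids any appeal to leading coefficients and hence works verbatim over any commutative ring containing $\mathbb{Q}$ (which is the setting in which the paper later applies the lemma).
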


We have the following interesting fact. 

\begin{lemma} \label{lem:HF=1}
Through the $r-1$ linearly general codimension-$2$ linear spaces $X_i, \, i\in[r-1]$, of $\PP_k^{r-1}$ passes a unique hypersurface of degree $r-2$, i.e. 
$$\HF(\cap_{i \in [r-1]} I_i, r-2) = 1.$$
\end{lemma}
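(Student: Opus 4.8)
The plan is to specialize the formula of Lemma \ref{lem:HF at n-1} to the case $\ell = r-1$ and the degree $\ell - 1 = r-2$, and then evaluate the resulting alternating binomial sum explicitly. Setting $\ell = r-1$ in Lemma \ref{lem:HF at n-1} gives
\begin{equation*}
\HF\Big(\bigcap_{i \in [r-1]} I_i,\, r-2\Big) = \sum_{j \ge 0} (-1)^j \binom{r-1}{j} \binom{2r-4-2j}{r-2}.
\end{equation*}
So the task reduces to showing this sum equals $1$. First I would isolate the $j = 0$ term, which is $\binom{2r-4}{r-2}$, and argue that the remaining alternating sum over $j \ge 1$ evaluates to $\binom{2r-4}{r-2} - 1$; equivalently, I would try to show the whole sum telescopes or collapses to a single binomial. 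The cleanest route is probably a generating-function / coefficient-extraction argument: write $\binom{2r-4-2j}{r-2}$ as the coefficient of $x^{r-2}$ in $(1+x)^{2r-4-2j} = (1+x)^{2r-4}\cdot (1+x)^{-2j}$, so that
\begin{equation*}
\sum_{j \ge 0} (-1)^j \binom{r-1}{j}\binom{2r-4-2j}{r-2} = [x^{r-2}]\, (1+x)^{2r-4} \sum_{j\ge 0} (-1)^j \binom{r-1}{j} (1+x)^{-2j},
\end{equation*}
where the inner sum is $\big(1 - (1+x)^{-2}\big)^{r-1} = \big((2x + x^2)/(1+x)^2\big)^{r-1}$. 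Hence the sum equals $[x^{r-2}]\, (1+x)^{2r-4} \cdot x^{r-1}(2+x)^{r-1} (1+x)^{-2(r-1)} = [x^{r-2}]\, x^{r-1}(2+x)^{r-1}(1+x)^{-2}$, and since $x^{r-1}(2+x)^{r-1}(1+x)^{-2}$ is divisible by $x^{r-1}$, the coefficient of $x^{r-2}$ vanishes.

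That last computation shows the naive specialization gives $0$, not $1$, which means one must be careful: the binomials $\binom{2r-4-2j}{r-2}$ in Lemma \ref{lem:HF at n-1} are written with the "top" binomial $\binom{\ell+r-2-2j}{\ell-1}$, whose correct reading when $\ell + r - 2 - 2j < \ell - 1$ is $0$ by the paper's convention, and the coefficient-extraction identity $\binom{m}{k} = [x^k](1+x)^m$ is only valid for $m \ge 0$. So the real content is that the true Hilbert function is $\max$ of the naive value and the Hilbert function of $\mathbb{P}^{r-1}$ — equivalently, one uses the short exact sequence in the proof of Lemma \ref{lem:HF at n-1} together with the fact that in degree $r-2$ the intersection ideal may fail to equal the product ideal (Proposition \ref{prp:Conca-Herzog} only guarantees equality in degrees $\ge \ell$). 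The honest approach is therefore: redo the short exact sequence argument at degree $r-2 = \ell - 1$ for $\ell = r-1$, tracking that $\HF(S/\cap I_i, r-2) = \HF(S/\cap_{i\in[\ell-1]}I_i, r-2) - \big(\dim[\bar S]_{r-2} - \HF(\bar S/\prod \bar I_i, r-2)\big)$, and observe that $\bar S$ is a polynomial ring in $r-2$ variables so $[\bar S]_{r-2}$ has dimension $\binom{2r-5}{r-3}$, while the product-ideal Hilbert function is computed by Conca--Herzog/Derksen since $r-2 \ge \ell - 1 = r - 2$.

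I expect the main obstacle to be exactly this bookkeeping at the boundary degree: making sure the convention $\binom{n}{b} = 0$ for $n < b$ is applied consistently and not silently replaced by the polynomial binomial $\big[\begin{smallmatrix} n \\ b\end{smallmatrix}\big]$, and correctly identifying in which of the three terms of the short exact sequence Proposition \ref{prp:Conca-Herzog} genuinely applies at degree $r-2$. Once the count is set up correctly, the combinatorial evaluation collapses to a single binomial whose value is $1$; geometrically this is the statement that $r-1$ general codimension-$2$ planes impose $\binom{2r-5}{r-3} - 1$ (rather than the maximal $\binom{2r-5}{r-3}$, or the "expected" number) conditions on forms of degree $r-2$, leaving a pencil — no, a single section — namely the unique degree-$(r-2)$ hypersurface through all of them. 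An alternative, perhaps more transparent, finish is to exhibit this hypersurface directly: if $X_i = V(\ell_{i,1}, \ell_{i,2})$ with generic linear forms, a product of $r-2$ suitably chosen linear forms (one from each of $r-2$ of the planes, plus one more linear relation from genericity) vanishes on all $r-1$ planes, and a dimension count via the above sequence shows uniqueness. I would present the generating-function computation as the spine of the argument, with the convention-handling done carefully term by term.
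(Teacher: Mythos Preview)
Your proposal contains a concrete arithmetic slip that derails the rest of the argument. Specializing Lemma \ref{lem:HF at n-1} to $\ell=r-1$ gives $\ell+r-2-2j=(r-1)+r-2-2j=2r-3-2j$, not $2r-4-2j$. With the correct top entry the sum to evaluate is
\[
\HF\Big(\bigcap_{i\in[r-1]}I_i,\,r-2\Big)=\sum_{j\ge 0}(-1)^j\binom{r-1}{j}\binom{2r-3-2j}{r-2},
\]
and Lemma \ref{lem:HF at n-1} applies as stated \textemdash{} there is no boundary subtlety, no failure of Proposition \ref{prp:Conca-Herzog}, and no need to rerun the short exact sequence argument. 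Your second and third paragraphs are chasing a phantom.

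The paper's evaluation is short: for $0\le j\le r-2$ one has $2r-3-2j\ge 1\ge 0$, so $\binom{2r-3-2j}{r-2}=\left[\begin{smallmatrix}2r-3-2j\\ r-2\end{smallmatrix}\right]$; the polynomial $p(t)=\left[\begin{smallmatrix}2r-3-2t\\ r-2\end{smallmatrix}\right]$ has degree $r-2<r-1$, so Lemma \ref{lem:binomial} gives $\sum_{j=0}^{r-1}(-1)^j\binom{r-1}{j}p(j)=0$; and the extraneous $j=r-1$ term is $(-1)^{r-1}\left[\begin{smallmatrix}-1\\ r-2\end{smallmatrix}\right]=(-1)^{r-1}(-1)^{r-2}=-1$, whence the desired sum equals $1$. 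Your generating-function route can be salvaged along the same lines: with the corrected exponent you get $[x^{r-2}]\,x^{r-1}(2+x)^{r-1}(1+x)^{-1}=0$, but this formal computation tacitly uses the power-series value $[x^{r-2}](1+x)^{-1}=(-1)^{r-2}$ at $j=r-1$, whereas the paper's convention sets $\binom{-1}{r-2}=0$; correcting for that single term again yields $1$. So the mechanism you half-identified (the coefficient-extraction identity failing for negative top entry) is exactly what produces the answer, not an obstruction to be worked around.
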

\begin{proof} 
Lemma \ref{lem:HF at n-1} gives us
\begin{align*}
&\HF(\cap_{i \in [r-1]} I_i, r-2)= \\
& =  \sum_{j \ge 0} (-1)^{j}  {r-1 \choose j}   {2r-3-2j \choose r-2} 
\end{align*}
\begin{align*}
&= \sum_{j = 0}^{\lfloor (2r-3)/2 \rfloor } (-1)^{j}  {r- 1\choose j} \begin{bmatrix} 2r-3-2j \\ r-2 \end{bmatrix} \\
&= \sum_{j = 0}^{r-2} (-1)^{j}  {r- 1\choose j} \begin{bmatrix} 2r-3-2j \\ r-2 \end{bmatrix}.
\end{align*} As a polynomial in $t$, the degree of $$p(t)=\begin{bmatrix} 2r-3-2t \\ r-2 \end{bmatrix}$$ is $r-2$. Hence Lemma \ref{lem:binomial} gives
$$\sum_{j=0}^{r-1} (-1)^{j}  {r- 1\choose j} \begin{bmatrix} 2r-3-2j \\ r-2 \end{bmatrix} = 0. $$ Now we are done by observing that the last term in the summation above for $j=r-1$ is equal to $-1$. \end{proof}

\subsection{Sundials}
The notion of a \emph{sundial} played a crucial role in the proof of \cite{hartshorne1982droites}. It was also used in \cite{hirschowitz1981postulation} and has more recently been generalized in various ways; e.g. see \cite{aladpoosh2021hartshorne}. 
Here we discuss yet another generalization that we will employ in the proof of Lemma \ref{lem:zero in degree n-2}.

\begin{definition} \label{dfn:sundial}
In $\PP^{r-1}_k$ consider two linear spaces $Y$ and $Z$, of codimensions $2$ and $i+2$ respectively, which lie in a hyperplane $H$, they intersect at a codimension $i+3$ linear space $W$, and $0 \le i \le r-4$. A $(2,i+2)$-sundial is a scheme defined by an ideal sheaf of the form $\I_Y \cap \I_Z \cap \I_W^2$. We say that the hyperplane $H$ supports the sundial.
\end{definition}

\begin{lemma} \label{lem:sundial}
Let $Y,Z$ be linearly general linear spaces in $\PP^{r-1}_k$ of codimensions $2$ and $2+i$ respectively, such that $4+i \le r$. Then there exists a flat family $E_c, \, c \in k$ of closed subschemes of $\PP^{r-1}_{k}$, such that $E_{c \neq 0}$ is isomorphic to $Y \cup Z$, while $E_0$ is a $(2,i+2)$-sundial. 
\end{lemma}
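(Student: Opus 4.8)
The plan is to construct the flat family explicitly by degenerating the linear space $Z$ while fixing $Y$, in such a way that the limit of $Z$ acquires an embedded component along the intersection with $Y$. First I would choose coordinates adapted to the configuration: since $Y$ has codimension $2$, write $I_Y=(x_1,x_2)$; since $Y$ and $Z$ are linearly general and $\codim Z=2+i$ with $4+i\le r$, after a change of coordinates we may take $I_Z=(x_3,x_4,\ldots,x_{4+i})$ plus possibly one of $x_1,x_2$ — but the cleanest choice is to arrange that $Y+Z$ spans the expected codimension, say $I_Z=(x_1,x_3,x_4,\ldots,x_{i+3})$, so that $Y\cap Z$ is cut out by $(x_1,x_2,x_3,\ldots,x_{i+3})$ and lies in the hyperplane $H=\{x_1=0\}$. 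The idea is then to rotate $Z$ toward a position $Z'$ inside $H$ that meets $Y$ along $W$, keeping track of the flat limit.

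The key step is to write down the one-parameter family of ideals. Following the classical sundial construction, I would set, over $S[\lambda]=S\otimes_k k[\lambda]$,
$$I_{Z_\lambda}=(x_1-\lambda x_2,\ x_3,\ x_4,\ \ldots,\ x_{i+3}),$$
so that $Z_\lambda\cong Z$ for all $\lambda$ (it is always a linear space of the same codimension), while $Z_0\subset H$. Then the family $E_\lambda$ should be defined by $\I_{E_\lambda}=\I_Y\cap \I_{Z_\lambda}$, i.e. by the ideal $J_\lambda=(x_1,x_2)\cap(x_1-\lambda x_2,x_3,\ldots,x_{i+3})$ in $S[\lambda]$. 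For $\lambda=c\ne 0$ this is the ideal of the disjoint-in-the-linearly-general-sense union $Y\cup Z$, hence $E_c\cong Y\cup Z$. The crux is to identify $J_0=\lim_{\lambda\to 0}J_\lambda$ and check it equals $I_Y\cap I_{Z_0}\cap I_W^2$ with $W=Y\cap Z_0$ cut out by $(x_1,x_2,x_3,\ldots,x_{i+3})$; the squared term appears exactly because two distinct linear spaces collide into a common hyperplane, so the flat limit of their union is fatter than the reduced union along their intersection. This is the standard mechanism behind the term ``sundial''.

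The two things requiring care are (i) computing the saturated limit ideal $J_0$ and matching it with the sundial ideal of Definition \ref{dfn:sundial}, and (ii) verifying flatness of $S[\lambda]/J_\lambda$ over $k[\lambda]$. For (i), I would compute generators of $J_\lambda$ directly: intersecting $(x_1,x_2)$ with $(x_1-\lambda x_2,x_3,\ldots,x_{i+3})$, one obtains generators $x_1(x_1-\lambda x_2)$, $x_2(x_1-\lambda x_2)$, together with $x_1 x_j$ and $x_2 x_j$ for $3\le j\le i+3$ (and these generate, by a short linear-algebra/Gröbner argument since the two ideals involved are monomial up to the coordinate change). Setting $\lambda=0$ gives the ideal generated by $x_1^2$, $x_1x_2$, and $x_1x_j,x_2x_j$ for $3\le j\le i+3$, which one checks is precisely $(x_1,x_2)\cap(x_1,x_3,\ldots,x_{i+3})\cap(x_1,x_2,x_3,\ldots,x_{i+3})^2$ — the $(2,i+2)$-sundial supported on $H=\{x_1=0\}$, with $Y=\{x_1=x_2=0\}$, $Z_0=\{x_1=x_3=\cdots=x_{i+3}=0\}$, $W=\{x_1=\cdots=x_{i+3}=0\}$. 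For (ii), flatness over the one-dimensional regular ring $k[\lambda]$ is equivalent to $S[\lambda]/J_\lambda$ being torsion-free as a $k[\lambda]$-module, which follows once one exhibits a monomial $k$-basis of $S[\lambda]/J_\lambda$ that is independent of $\lambda$; equivalently, one shows the Hilbert function of $J_c$ is constant in $c$ via Proposition \ref{prp:flat degeneration} by computing it at a single generic value and at $0$. The main obstacle is bookkeeping in step (i): pinning down a generating set of the intersection ideal $J_\lambda$ and confirming, without slippage, that specializing at $\lambda=0$ yields exactly the sundial ideal rather than something smaller (non-saturated) or larger; the condition $4+i\le r$ is what guarantees enough coordinates are available for $Y$, $Z$, and the ambient hyperplane to sit in linearly general position, so I would make sure that hypothesis is used precisely at the point where linear generality of the configuration is invoked.
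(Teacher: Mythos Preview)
Your family is set up incorrectly: for every value of $\lambda$, the linear form $x_1-\lambda x_2$ you put into $I_{Z_\lambda}$ already lies in $I_Y=(x_1,x_2)$. Hence
\[
I_Y+I_{Z_\lambda}=(x_1,x_2,x_3,\ldots,x_{i+3})
\]
has codimension $i+3$ for \emph{all} $\lambda$, not $i+4$. So your $E_c$ with $c\neq 0$ is the union of two linear spaces meeting in codimension $i+3$; it is never isomorphic to the linearly general union $Y\cup Z$ of the statement. In other words, you have degenerated a special configuration, not the generic one.

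This same slip invalidates the identification of the limit. Since $Y\cap Z_\lambda$ is constant in $\lambda$, the short exact sequence relating $S/(I_Y\cap I_{Z_\lambda})$ to $S/I_Y\oplus S/I_{Z_\lambda}$ and $S/(I_Y+I_{Z_\lambda})$ shows the Hilbert function of $J_\lambda$ is constant; hence the flat limit at $0$ is simply
\[
J_0=(x_1,x_2)\cap(x_1,x_3,\ldots,x_{i+3})=(x_1)+(x_2x_3,\ldots,x_2x_{i+3}),
\]
which contains the linear form $x_1$ and is \emph{not} the sundial ideal (the sundial lives in $\mf^2$). Your list of quadratic generators misses exactly this: $x_1-\lambda x_2\in J_\lambda$ for every $\lambda$. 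The embedded component $I_W^2$ can only appear in the flat limit if the generic fibre is linearly general and the intersection $Y\cap Z_\lambda$ \emph{jumps} at $\lambda=0$; in your family there is no jump.

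The fix is to start from a genuinely linearly general pair, e.g.\ $I_Y=(x_1,x_2)$ and $I_Z=(x_3,\ldots,x_{i+4})$ (this is where $4+i\le r$ is used), and then move one of them so that a defining linear form collides with one of the other's only at $\lambda=0$. For instance, replace $x_2$ by $\lambda(x_2-x_3)+x_3$ in $I_Y$: for $\lambda\neq 0$ the sum has codimension $i+4$, while at $\lambda=0$ one gets $(x_1,x_3)$ sharing $x_3$ with $I_Z$. Then $J_\lambda$ equals the product $(x_1,\lambda(x_2-x_3)+x_3)\cdot(x_3,\ldots,x_{i+4})$ for all $\lambda$ (a regular-sequence argument), flatness is immediate from primality of the factors, and specializing to $\lambda=0$ gives $(x_1,x_3)(x_3,\ldots,x_{i+4})$, whose primary decomposition is exactly $(x_1,x_3)\cap(x_3,\ldots,x_{i+4})\cap(x_1,x_3,\ldots,x_{i+4})^2$.
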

\begin{proof}
We may assume that $I_Y=(x_1,x_2)$ and $I_Z=(x_3,\dots,x_{i+4})$. With $\lambda$ a new indeterminate of degree $0$, we define in the extended polynomial ring $S[\lambda]$ the ideal 
$$J_\lambda = \big(x_1,\lambda(x_2-x_3)+x_3\big) \cap (x_3,\dots,x_{i+4}).$$ Since $x_1,\lambda(x_2-x_3)+x_3$ is a regular sequence in $S[\lambda]$ and $S[\lambda] / (x_3,\dots,x_r)$, we have 
$$J_\lambda = \big(x_1,\lambda(x_2-x_3)+x_3\big) (x_3,\dots,x_{i+4}).$$ Moreover, the canonical ring homomorphism $\varphi: k[\lambda] \rightarrow S[\lambda] / J_\lambda$ is flat if and only if $S[\lambda]/J_\lambda$ is torsion-free as a $k[\lambda]$-module \textemdash{now this follows immediately once one notes that $\big(x_1,\lambda(x_2-x_3)+x_3\big)$ and  $(x_3,\dots,x_{i+4})$ are prime ideals of $S[\lambda]$, whose contraction in $k[\lambda]$ is zero}.

The ideal $J_\lambda$ induces a flat family of closed subschemes $E_c, \, c \in k$ of $\PP_k^{r-1}$. The ideal of $S$ defining $E_c$ is $$J_c = \big(x_1,c(x_2-x_3)+x_3\big) (x_3,\dots,x_{i+4}).$$ For $c \neq 0$ we have that 
$$J_c = \big(x_1,c(x_2-x_3)+x_3\big) \cap (x_3,\dots,x_{i+4}),$$ defines two codimension $2$ and $i+2$ linear spaces of $\PP_k^{r-1}$, which intersect at codimension $i+4$. On the other hand,
\begin{align}
 J_0 & = \big(x_1,x_3\big) (x_3,\dots,x_{i+4}) \nonumber \\
 & = (x_1,x_3) \cap (x_3,\dots,x_{i+4}) \cap (x_1,x_3,\dots,x_{i+4})^2 \nonumber 
 \end{align} defines a $(2,i+2)$-sundial.
\end{proof}

\subsection{A Variant of Castelnuovo's Inequality} \label{section:Castelnuovo's inequality}

For a homogeneous ideal $J$ of $S$ and any linear form $x$, the short exact sequences
$$0 \rightarrow S/(J:(x)) (-1) \stackrel{x}{\rightarrow} S/J \rightarrow S/ (J+(x)) \rightarrow 0,$$
$$0 \rightarrow S(-1) \stackrel{x}{\rightarrow} S \rightarrow S/(x) \rightarrow 0,$$ give
$$\HF(J,\nu) = \HF\big((J+(x))/(x),\nu\big) + \HF(J:(x),\nu-1).$$ Castelnuovo's inequality \textemdash{usually quoted in sheaf-theoretic form\textemdash} is an easy consequence of the equality above: $$\HF(J,\nu) \le \HF\big(\big((J+(x))/(x)\big)^{\sat},\nu\big) + \HF(J:(x),\nu-1).$$

Call $Y$ the closed subscheme of $\PP_k^{r-1}$ defined by $J$ and $H$ the hyperplane defined by the linear form $x$. Then the closed subscheme of $\PP_k^{r-1}$ defined by the ideal $J:(x)$ is called the residual scheme of $Y$ with respect to $H$, denoted as $\Res_H(Y)$. The closed subscheme of $\PP_k^{r-2}$ defined by the ideal $(J+(x))/(x)$ is called the trace of $Y$ on $H$, and is denoted by $\Tr_H(Y)$ \textemdash note $((J+(x))/(x))^{\sat}$ also defines $\Tr_H(Y)$. 

In the proof of Lemma \ref{lem:zero in degree n-2} we will be concerned with showing $\HF(J,\nu)=0$ for certain ideals $J$ and degrees $\nu$. Often, the scheme $Y$ defined by the ideal $J$ will be the reduced union in $\PP_k^{r-1}$ of linear spaces $Y_i$, together with a $(2,j+\ell)$-sundial $U$, for some $j, \ell \ge 0$. That is, $J = (\cap_i I_{Y_i}) \cap I_U$. The linear form $x$ will always be chosen to define a hyperplane that supports the sundial; e.g., in the notation of the proof of Lemma \ref{lem:sundial}, $I_U=J_0$ and we would take $x=x_3$. Instead of dealing with the trace scheme $\Tr_H(Y)$ defined by the possibly non-saturated ideal of $S/(x)$, $$\frac{(\cap_i I_{Y_i}) \cap I_U+(x)}{(x)},$$ it will be more convenient to work with its subscheme $\Tr^\dagger_H(Y)$, defined by the larger ideal 
$$\left(\bigcap_i \frac{I_{Y_i} +(x)}{(x)}\right) \cap \left(\frac{I_U+(x)}{(x)}\right).$$ Since the sundial will always be of type $(2,j+\ell)$ and $(x)$ defines a hyperplane that supports the sundial, we see that $\Tr_H(U)$ \textemdash{ defined by the ideal $(I_U+(x))/(x)$\textemdash} is the union in $\PP_k^{r-2}$ of a hyperplane together with a codimension-$(j+\ell-1)$ linear space $Z$. Thus $\Tr_H(U)$ always contributes a hyperplane as an irreducible component of $\Tr^\dagger_H(Y)$, and possibly one more whenever $j+\ell-1=1$. Let $1 \le \alpha \le 2$ be the number of the hyperplanes that appear as irreducible components of $\Tr_H(U)$. Denote by $\Tr^{\dagger \dagger}_H(Y)$ the scheme obtained by removing these components from $\Tr^{\dagger}_H(Y)$. Using Castelnuovo's inequality, it becomes clear that to show no  hypersurface of $\PP_k^{r-1}$ of degree $\nu$ contains $Y$, it suffices to show that i) there is no hypersurface of $\PP_k^{r-1}$ of degree $\nu-1$ that contains $\Res_H(Y)$, and ii) there is no hypersurface of $\PP_k^{r-2}$ of degree $(\nu-\alpha)$ that contains $\Tr^{\dagger \dagger}_H(Y)$. The arguments in Section \ref{section:2nd-HF} make repeated use of this idea. 

\subsection{Third Hilbert Function Lemma} \label{section:2nd-HF}
The hardest part of proving Theorem \ref{thm:codim2} is establishing the following key fact:

\begin{lemma} \label{lem:zero in degree n-2}
Under the assumption of Theorem \ref{thm:codim2} with $n=2r-3$, we have $$\HF(\cap_{i \in [n]} I_i, n-2) = 0.$$
\end{lemma}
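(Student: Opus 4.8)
The plan is to prove $\HF(\cap_{i\in[n]} I_i, n-2)=0$ for $n=2r-3$ by a double induction, on $r$ and on the number of linear spaces, combining the flat degeneration of Lemma~\ref{lem:sundial} with the residual/trace decomposition set up in \S\ref{section:Castelnuovo's inequality}. By Proposition~\ref{prp:flat degeneration} (in the form quoted at the end of \S\ref{subsection:flat degenerations}), it suffices to exhibit \emph{one} specialized configuration $X'$ — namely, replace as many disjoint pairs $X_i\cup X_j$ of codimension-$2$ spaces as possible by $(2,2)$-sundials — and show that no hypersurface of degree $n-2$ passes through $X'$. The reason sundials help is exactly the mechanism of \S\ref{section:Castelnuovo's inequality}: choosing the hyperplane $H$ to support a sundial, the residual scheme $\Res_H(X')$ has strictly smaller ``degree count'' than $X'$ (a $(2,2)$-sundial $U$ has $I_U:(x)$ defining just a single codimension-$2$ space), while the trace $\Tr^{\dagger\dagger}_H(X')$ lives in $\PP_k^{r-2}$ and again looks like a union of codimension-$2$ spaces (possibly with a leftover sundial) — both of which are governed by the same kind of statement one dimension or a few spaces down.

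Concretely, I would first record the base cases. The smallest case is $r=3$, $n=3$: three generic codimension-$2$ (i.e.\ dimension-$0$) points in $\PP^2_k$ impose independent conditions on conics, so $\HF(\cap I_i,1)=0$ since $\dim[S]_1=3$ and the three points are in linearly general position — this is classical and also follows from Proposition~\ref{prp:Derksen} together with Lemma~\ref{lem:HF=1}-type bookkeeping. (One should also check the degenerate low cases where $n<2r-3$ cannot be split into enough sundials; there the statement is vacuous or handled directly.) For the inductive step, take the specialized $X'$ and a hyperplane $H=\{x=0\}$ supporting one chosen $(2,2)$-sundial $U$. Then $\Res_H(X')$ is, up to relabeling, a union of generic codimension-$2$ spaces together with \emph{fewer} sundials — its relevant count drops from $n$ to essentially $n-1$ — so by the inductive hypothesis (on the number of spaces, at the same $r$) no degree $(n-2)-1=n-3$ hypersurface contains it. On the trace side, $\Tr_H(U)$ contributes $\alpha$ hyperplanes with $1\le\alpha\le 2$ (for a $(2,2)$-sundial, $j+\ell-1=1$, so $\alpha=2$), and $\Tr^{\dagger\dagger}_H(X')$ is a union of generic codimension-$2$ spaces in $\PP_k^{r-2}$; one needs no degree $(n-2)-\alpha=n-4$ hypersurface of $\PP_k^{r-2}$ to contain it, which is the statement at level $r-2$ after checking that $n-4 = 2(r-2)-3$ and that the number of spaces there is at most $2(r-2)-3$. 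Then the bullet i)--ii) criterion of \S\ref{section:Castelnuovo's inequality} yields $\HF(\cap_i I_{X'_i},n-2)=0$, hence the same for the generic $X$.

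The arithmetic bookkeeping — matching the number of codimension-$2$ spaces, of remaining sundials, and the drop in degree so that both the residual and trace statements are genuine instances of the (suitably strengthened) inductive hypothesis, rather than being off by one — is where the argument must be set up carefully. In particular the clean statement ``$\HF(\cap_{i\in[n]}I_i,n-2)=0$ for $n=2r-3$'' is almost certainly too rigid to carry the induction directly; I expect the real work is to formulate a more flexible auxiliary vanishing statement, allowing $n\le 2r-3$ generic codimension-$2$ spaces together with a prescribed number $s$ of $(2,2)$-sundials and asking for vanishing in an appropriate degree depending on $n$ and $s$, and then to verify that the residual and trace operations both preserve membership in this family with the degree decreasing by the right amount each time. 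The main obstacle, therefore, is not any single computation but organizing this induction: ensuring that after the degeneration the trace scheme in $\PP^{r-2}_k$ still satisfies the genericity (linearly general position) needed to invoke the inductive hypothesis, controlling how a leftover odd sundial propagates, and pinning down the base cases of the auxiliary statement — with the $r=3$ (points in $\PP^2$) and the ``one sundial, few spaces'' configurations being the ones to check by hand, the latter plausibly leaning on Lemma~\ref{lem:HF=1} and Lemma~\ref{lem:HF at n-1}.
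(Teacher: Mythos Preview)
Your overall strategy --- degenerate to a sundial, apply the residual/trace split of \S\ref{section:Castelnuovo's inequality}, with the trace descending in $r$ --- is exactly what the paper does, and your instinct that the bare statement is too rigid to carry the induction is correct. But there is a concrete error that breaks your proposed induction: the residual of a $(2,2)$-sundial with respect to its supporting hyperplane is \emph{not} a codimension-$2$ linear space. In the notation of the proof of Lemma~\ref{lem:sundial} with $i=0$ one has $I_U=(x_1,x_3)(x_3,x_4)$ and $I_U:(x_3)=(x_1,x_3,x_4)$, a codimension-$3$ linear space (this is precisely the locus $W$ of Definition~\ref{dfn:sundial}). Thus after a single residual step your scheme consists of $2r-5$ codimension-$2$ generic linear spaces together with one codimension-$3$ linear space, and you must show that no hypersurface of degree $2r-6$ contains it. This is neither an instance of the lemma nor of your proposed strengthening to ``codimension-$2$ spaces plus several $(2,2)$-sundials''; iterating with further $(2,2)$-sundials cannot absorb that codimension-$3$ piece, and degenerating multiple pairs at once only compounds the bookkeeping (the unsupported sundials have no clean trace/residual along $H$).

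The paper's fix is a different auxiliary family: the statement $\Hs'_{2r-5-j,\,j+3,\,2r-6-j,\,r}$ of Lemma~\ref{lem:auxiliary} allows $2r-5-j$ codimension-$2$ spaces together with \emph{one} codimension-$(j+3)$ space, for $0\le j\le r-4$. The key move is then to degenerate a codimension-$2$ space together with the codimension-$(j+3)$ space into a $(2,j+3)$-sundial, not a $(2,2)$-sundial. The residual of a $(2,j+3)$-sundial is a codimension-$(j+4)$ space, so the residual step lands in $\Hs'$ with $j\mapsto j+1$ at the same $r$; the trace step lands in $\Hs'$ with $r\mapsto r-1$ and $j\mapsto j-1$ (or in the main statement $\Hs$ at $r-1$ when $j=0$). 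This double induction --- ascending on $r$, descending on $j$ --- terminates at $j=r-4$, where the configuration is $r-1$ codimension-$2$ spaces plus a generic point, and Lemma~\ref{lem:HF=1} supplies the base case: the unique degree-$(r-2)$ hypersurface through the $r-1$ spaces misses a generic point. Your guess that Lemma~\ref{lem:HF=1} anchors a base case was right; the idea you were missing is that the sundials must be of variable type $(2,j+3)$ to keep pace with the growing codimension of the residual.
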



From the upper-semicontinuity of the Hilbert function in a flat family, it suffices to show that this holds for a special member of a flat family of schemes, whose general member is the union of $n$ codimension-$2$ generic linear spaces of $\PP_k^{r-1}$. We will be denoting the statement of Lemma \ref{lem:zero in degree n-2} by $$\Hs_{2r-3,\, 2r-5,\, r}$$ \textemdash the first index refers to the number of linear spaces in the arrangement, the second index refers to the degree of interest of the ideal of the arrangement, and the last index is the number of ambient variables. 

To prove Lemma \ref{lem:zero in degree n-2}, we proceed by induction on $r$. As a base for the induction, we take $r=3$ \textemdash the statement is then trivial, since no line in $\PP^2$ contains three generic points. In the sequel we assume that i) $r\ge 4$, and ii) $\Hs_{2r'-3,2r'-5,r'}$ is true for any $3 \le r' <r$. 

Within this induction hypothesis, we prove an auxiliary statement:

\begin{lemma} \label{lem:auxiliary}
Let $0 \le j \le r-4$. Consider a union in $\PP_k^{r-1}$ of $2r-5-j$ codimension-$2$ generic  linear spaces, together with a codimension-$(j+3)$ generic linear space. Then there is no hypersurface of degree $2r-6-j$ containing this union. 
\end{lemma}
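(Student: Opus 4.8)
The plan is to prove Lemma \ref{lem:auxiliary} by a double induction: on the outer parameter $r$ (already running, with $\Hs_{2r'-3,2r'-5,r'}$ available for $r'<r$), and an inner induction on $j$, going from the largest value $j=r-4$ down to $j=0$. In each case I would degenerate using the sundial construction of Lemma \ref{lem:sundial}: take one of the codimension-$2$ planes $Y$ and the codimension-$(j+3)$ plane $Z$, which lie linearly generally, and degenerate $Y\cup Z$ into a $(2,j+1)$-sundial $U$ (so $i=j+1$ in the notation of Lemma \ref{lem:sundial}, which requires $4+i=j+5\le r$, i.e. $j\le r-5$; the extreme case $j=r-4$ must be handled separately, see below). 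By Proposition \ref{prp:flat degeneration} it suffices to show the degenerate scheme — the reduced union of the remaining $2r-6-j$ codimension-$2$ generic planes together with the sundial $U$ — lies on no hypersurface of degree $2r-6-j$.

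Next I would apply the variant of Castelnuovo's inequality set up in \S\ref{section:Castelnuovo's inequality}, choosing $x$ to cut out the hyperplane $H$ supporting the sundial. This splits the problem into the residual scheme $\Res_H(Y)$ in $\PP^{r-1}_k$ at degree $2r-7-j$, and the trace scheme $\Tr^{\dagger\dagger}_H(Y)$ in $\PP^{r-2}_k$ at degree $2r-6-j-\alpha$. The residual of a sundial $\I_Y\cap\I_Z\cap\I_W^2$ along its supporting hyperplane is (the union of) a codimension-$2$ plane with a codimension-$(j+2)$ plane meeting it, so the residual scheme is again a union of $2r-7-j$ codimension-$2$ generic planes together with one codimension-$(j+2)$ generic plane — exactly an instance of Lemma \ref{lem:auxiliary} with $r$ unchanged and $j$ replaced by $j-1$, available by the inner induction (when $j\ge1$; the base $j=0$ of the inner induction gives a union of $2r-5$ codimension-$2$ planes, which is $\Hs_{2r-5,2r-7,r}$-adjacent and needs its own bookkeeping — in fact this is where Lemma \ref{lem:zero in degree n-2} itself feeds in). For the trace: restricting the $2r-6-j$ generic codimension-$2$ planes to $H\cong\PP^{r-2}_k$ gives $2r-6-j$ generic codimension-$2$ planes there, and the sundial contributes $\alpha$ hyperplanes which get stripped off; with $\alpha=1$ when $j+1\ge2$ and $\alpha=2$ when $j+1=1$, one checks the residual trace $\Tr^{\dagger\dagger}_H(Y)$ is a union of $2r-6-j$ codimension-$2$ generic planes in $\PP^{r-2}_k$, and we need no hypersurface of degree $2r-6-j-\alpha = 2(r-1)-6-(j-1)$ (when $\alpha=1$) through it — this is Lemma \ref{lem:auxiliary} in ambient dimension $r-1<r$, supplied by the outer induction hypothesis, with the $j=0,\alpha=2$ case instead reducing to $\Hs_{2(r-1)-3,2(r-1)-5,r-1}$.

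The main obstacle, as I see it, is the extreme case $j=r-4$, where the sundial degeneration of Lemma \ref{lem:sundial} is unavailable because the dimension constraint $4+i\le r$ fails: here $Z$ has codimension $r-1$, i.e. is a point, and $Y\cup Z$ cannot be degenerated to a sundial inside a hyperplane in the same way. For that case I would argue directly: the union is $2r-5-(r-4)=r-1$ codimension-$2$ generic planes plus a generic point, and we want no hypersurface of degree $2r-6-(r-4)=r-2$ through it. By Lemma \ref{lem:HF=1} there is a \emph{unique} hypersurface of degree $r-2$ through $r-1$ linearly general codimension-$2$ planes; a generic point will not lie on that fixed hypersurface, so the space of degree-$(r-2)$ forms through the whole configuration is zero. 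This anchors the inner induction. A secondary technical point is tracking the exact value of $\alpha$ and the precise codimensions of residual/trace components through the sundial definition (Definition \ref{dfn:sundial}), making sure linear generality is preserved after specialization and restriction — routine but needs care, especially matching the parameter shifts so that every reduced subproblem is genuinely covered by either the inner induction on $j$, the outer induction on $r$, or Lemma \ref{lem:zero in degree n-2} / Lemma \ref{lem:HF=1}. Once Lemma \ref{lem:auxiliary} is in hand for all $0\le j\le r-4$, the case $j=0$ will be precisely the input needed to run the analogous Castelnuovo argument for $\Hs_{2r-3,2r-5,r}$ itself.
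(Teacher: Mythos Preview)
Your overall architecture is exactly the paper's: double induction (ascending on $r$, descending on $j$), sundial degeneration, the Castelnuovo split into residual and trace, and the base $j=r-4$ via Lemma~\ref{lem:HF=1}. The gap is in the inductive step, where your residual and trace computations are incorrect and, as a consequence, the roles of the two inductions get reversed.

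Concretely: the sundial formed from a codimension-$2$ plane and a codimension-$(j+3)$ plane has type $(2,j+3)$, not $(2,j+1)$ (with $i=j+1$ the type is $(2,i+2)$); since $j+3\ge 3$ one has $\alpha=1$ throughout this lemma. Taking $I_U=(x_1,x_3)(x_3,\dots,x_{i+4})$ as in the proof of Lemma~\ref{lem:sundial}, the residual is $I_U:(x_3)=(x_1,x_3,\dots,x_{i+4})$, i.e.\ the \emph{single} codimension-$(j+4)$ space $W$, not a union of two planes. Hence $\Res_H$ of the whole configuration consists of the $2r-6-j$ untouched codimension-$2$ planes together with one codimension-$(j+4)$ plane, at degree $2r-7-j$: this is the instance $j\mapsto j+1$ (same $r$), and it is precisely what the descending inner induction supplies. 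On the trace side, modulo $x_3$ one gets $(x_1)(x_4,\dots,x_{i+4})$, a hyperplane union a codimension-$(j+2)$ space; after stripping the single hyperplane, $\Tr^{\dagger\dagger}_H$ is the union in $\PP^{r-2}_k$ of $2r-6-j$ codimension-$2$ planes \emph{together with} that codimension-$(j+2)$ plane (which you omitted), at degree $2r-7-j$. For $j>0$ this is $\Hs'_{2(r-1)-5-(j-1),\,(j-1)+3,\,2(r-1)-6-(j-1),\,r-1}$, and for $j=0$ it collapses to $\Hs_{2(r-1)-3,\,2(r-1)-5,\,r-1}$; both are furnished by the outer induction on $r$. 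So the correct bookkeeping is residual $\to(r,j+1)$ and trace $\to(r-1,j-1)$, the reverse of what you wrote; your version would require the case $j-1$ in the same $r$, which descending induction from $j=r-4$ does not provide. Finally, Lemma~\ref{lem:zero in degree n-2} is not an input here --- Lemma~\ref{lem:auxiliary} is proved first and then feeds into it.
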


We denote the statement of Lemma \ref{lem:auxiliary} by $$\Hs'_{2r-5-j, \, j+3, \, 2r-6-j, \, r},$$ where the first
index refers to the number of codimension-$2$ linear spaces in the arrangement, the second index refers to the codimension of the remainder single linear space in the arrangement, the third index refers to the degree of interest of the ideal of the arrangement, and the last index is the number of ambient variables.

We prove the statement by ascending induction on $r$ and descending induction on $j$. For the base of our induction on $r$ we take $r=4$ \textemdash{the only possibility is $j=0$ and the statement follows from the well-known fact that there is a unique quadric surface in $\PP^3$ through three generic lines, and hence it suffices to take the point outside the quadric (thus no quadric contains three generic lines and a generic point)}. For any $r$, the base for the induction on $j$ is for $j = r-4$. The statement then becomes $\Hs'_{r-1,r-1,r-2,r}$. This follows immediately from Lemma \ref{lem:HF=1}, because through $r-1$ generic codimension-$2$ linear spaces in $\PP_k^{r-1}$ passes a unique hypersurface of degree $r-2$, and the codimension-$(r-1)$ space is a generic point, which can be taken to be outside that hypersurface. In the sequel we will assume that i) $\Hs'_{2r'-5-j, \, j+3, \, 2r'-6-j, \, r'}$ is true for any $r' < r$ and any $j$ such that $0 \le j \le r'-4$, and ii) $\Hs'_{2r-5-j', \, j'+3, \, 2r-6-j', \, r}$ is true for any $r$ and any $j'$ such that $ j<j'\le r-4$.

With $0 \le j<r-4$, let $Y$ be the scheme of Lemma \ref{lem:auxiliary}. Since $2+(j+3)\le r$, we apply Lemma \ref{lem:sundial} to degenerate $Y$ into a scheme $Y'$ that consists of $2r-6-j$ codimension-$2$ generic linear spaces and a $(2,j+3)$-sundial. Now we apply the variant of Castelnuovo's inequality described in Section \ref{section:Castelnuovo's inequality} \textemdash it is enough to show that i) there is no hypersurface of $\PP_k^{r-1}$ of degree $2r-7-j$ that contains the residual scheme $\Res_H(Y')$ of $Y'$ with respect to a hyperplane $H$ that supports the sundial, and ii) there is no hypersurface of $\PP_k^{r-2}$ of degree $2r-7-j$ that contains $\Tr^{\dagger\dagger}_H(Y')$.

The residual scheme $\Res_H(Y')$ is the union in $\PP_k^{r-1}$ of $2r-6-j$ codimension-$2$ generic linear spaces, together with a codimension-$(j+4)$ generic linear space. The fact that such a scheme lies in no hypersurface of degree $2r-7-j$ is the statement  $$\Hs'_{2r-5-(j+1), \, (j+1)+3, \, 2r-6-(j+1), \, r}.$$ As $j+1 \le r-4,$ this is true by our induction hypothesis on $j$ for $\Hs'$. 
 
Next, $\Tr^{\dagger\dagger}_H(Y')$ is the union in $\PP_k^{r-2}$ of $2r-6-j$ codimension-$2$ generic linear spaces, together with a codimension-$(j+2)$ generic linear space. We claim that there is no degree-$(2r-7-j)$ hypersurface of $\PP_k^{r-2}$ that contains $\Tr^{\dagger\dagger}_H(Y')$. If $j=0$, this is the statement $$\Hs_{2(r-1)-3,2(r-1)-5,r-1},$$ which is true by our induction hypothesis on $r$ for $\Hs$. If $j>0$, it is the statement  $$\Hs'_{2(r-1)-5-(j-1), \, (j-1)+3, \, 2(r-1)-6-(j-1), \, r-1},$$ which is true by our induction hypothesis on $r$ for $\Hs'$ (note that $j-1 < (r-1)-4$). This concludes the proof of Lemma \ref{lem:auxiliary}.

We now finish the proof of Lemma \ref{lem:zero in degree n-2}. So let $Y=\bigcup_{i \in [2r-3]} Y_i$ be the union in $\PP_k^{r-1}$of $2r-3$ codimension-$2$ generic linear spaces $Y_i$. Since $r \ge 4$, by Lemma \ref{lem:sundial} we degenerate $Y_{2r-4} \cup Y_{2r-3}$ within a flat family to a $(2,2)$-sundial. The resulting scheme $Y'$ is the union in $\PP_k^{r-1}$ of $2r-5$ codimension-$2$ generic linear spaces, together with a generic $(2,2)$-sundial. By the upper-semicontinuity of the Hilbert function in a flat family, it is enough to show that there is no hypersurface of degree $2r-5$  that contains $Y'$. We do this by applying the variant of Castelnuovo's inequality described in Section \ref{section:Castelnuovo's inequality}. So let $H$ be a hyperplane that supports the $(2,2)$-sundial. The residual scheme $\Res_H(Y')$ is the union in $\PP_k^{r-1}$of $2r-5$ codimension-$2$ generic linear spaces together with a codimension-$3$ generic linear space. Saying that there is no hypersurface of degree $2r-6$  that contains $\Res_H(Y')$, is the same as saying that $$\Hs'_{2r-5,\, 3,\, 2r-6, \, r}$$ holds true \textemdash now this follows from Lemma \ref{lem:auxiliary} with $j=0$. Next, $\Tr^{\dagger \dagger}_H(Y')$ is the union in $\PP_k^{r-2}$ of $2r-5$ codimension-$2$ generic linear spaces. We must show that there is no hypersurface of degree $2r-7$ that contains $\Tr^{\dagger \dagger}_H(Y')$ (note that $\alpha=2$ in the notation of Section \ref{section:Castelnuovo's inequality}). But this is the statement $$\Hs_{2(r-1)-3, \, 2(r-1)-5, \, r-1},$$ which is true by our induction hypothesis on $r$ for $\Hs$. 

\subsection{Saturation} The last ingredient is:
\begin{lemma}\label{lem:saturation}
Let $J_i, \, i \in [\ell]$, be ideals generated by linear forms; suppose $\dim S/J_i \ge 2$ for at least one $i$. Then for a generic linear form $h$ we have
\begin{align*}
\big[\cap_{i\in [\ell]} J_i + (h) \big]^{\sat} = \cap_{i \in [\ell]} \big(J_i + (h)\big). 
\end{align*}
\end{lemma}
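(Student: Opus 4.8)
The plan is to prove the equality of ideals by verifying it degree-by-degree, exploiting that both sides agree in all sufficiently large degrees (so only low degrees need attention) and that saturation is detected by a single generic linear form. Write $K = \cap_{i\in[\ell]} J_i$. The inclusion $\big[K+(h)\big]^{\sat} \supseteq K+(h)$ is automatic, and since the right-hand side $\cap_i (J_i+(h))$ is visibly saturated (each $J_i+(h)$ is an intersection of primes generated by linear forms, hence saturated, and an intersection of saturated ideals is saturated), while obviously $K+(h) \subseteq \cap_i(J_i+(h))$, it suffices to show $\big[\cap_i(J_i+(h))\big]_\nu \subseteq \big[K+(h)\big]_\nu$ for all $\nu$. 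First I would reduce to the hyperplane $H = V(h) \cong \PP_k^{r-2}$: after a generic change of coordinates take $h = x_r$, and identify $S/(h)$ with a polynomial ring $\bar S$ in $r-1$ variables. Then $\cap_i (J_i + (h))/(h) = \cap_i \bar J_i$ is the intersection ideal of the restricted arrangement in $\bar S$, and $\big(K+(h)\big)/(h)$ is the (possibly non-saturated) ideal generated by the image of $K$; so the claim becomes that the restriction map $K \to \bar S$ has image with the \emph{same} degreewise dimension as $\cap_i \bar J_i$, i.e. that $\HF\big((K+(h))/(h),\nu\big) = \HF(\cap_i \bar J_i,\nu)$ for every $\nu$.

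Next I would set up the standard exact sequence. From $0 \to S/K:(h)\,(-1) \xrightarrow{\;h\;} S/K \to S/(K+(h)) \to 0$ one gets
\[
\HF\big((K+(h))/(h),\nu\big) = \HF(K,\nu) - \HF\big(K:(h),\nu-1\big),
\]
so I must show $\HF(K:(h),\nu-1) = \HF(K,\nu) - \HF(\cap_i \bar J_i,\nu)$. Because $h$ is generic and the arrangement $\{J_i\}$ is an arrangement of linear ideals, $K:(h)$ has a clean description: by genericity $h$ lies in no $J_i$, so $J_i : (h) = J_i$, and one expects $K:(h) = \cap_i (J_i:(h)) = \cap_i J_i = K$ \emph{as ideals} — but that is false in low degrees precisely because $K$ is not generated in the expected degrees; what is true is the saturated-ideal identity $(K:(h))^{\sat} = K^{\sat} = K$. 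So the real content is measuring the failure of $K:(h)$ to be saturated, equivalently $\reg(K/(K:(h)))$, against the failure of $(K+(h))/(h)$ to be saturated, i.e. $\sat$ of the trace. The cleanest route is to observe that both $K$ and $\cap_i\bar J_i$ agree with their respective products in all degrees $\ge \ell$ by Proposition~\ref{prp:Conca-Herzog} (after noting the restricted arrangement stays linearly general for generic $h$), and the product ideal $\prod_i J_i$ behaves well under restriction by a generic linear form — $\overline{\prod_i J_i} = \prod_i \bar J_i$ and $(\prod_i J_i):(h) = \prod_i J_i$ since each $J_i:(h)=J_i$ and products of such ideals inherit this. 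This forces $\HF(K:(h),\nu) = \HF(K,\nu)$ for $\nu \ge \ell-1$ and, combined with the left-exactness already recorded, pins down the low-degree discrepancy to exactly match $\HF(\cap_i\bar J_i,\nu)$.

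The step I expect to be the main obstacle is controlling the low degrees $\nu < \ell$, where neither $K$ nor $K:(h)$ is the product ideal and the Hilbert functions are genuinely mysterious; here I cannot use Derksen's polynomial or Proposition~\ref{prp:Conca-Herzog} directly. The way around it: rather than compute Hilbert functions, argue at the level of ideals that $\big[\cap_i(J_i+(h))\big]_\nu \subseteq \big[K+(h)\big]_\nu$ directly. Given a form $\bar f \in \cap_i(J_i+(h))$ of degree $\nu$, lift it to $f\in S$; then $f \in J_i + (h)$ for each $i$, so one can correct $f$ modulo $(h)$ to land in $J_i$ — but the corrections must be made \emph{simultaneously} for all $i$, and this is exactly where the hypothesis $\dim S/J_i \ge 2$ for some $i$ (equivalently $\operatorname{ht} J_i \le r-2$, so the restriction $\bar J_i$ stays a proper nonzero ideal and $V(h)$ meets each $X_i$ properly) is used: it guarantees that for generic $h$ the scheme-theoretic restrictions $X_i \cap H$ form a linearly general arrangement with the expected dimensions, so an apolarity/Chinese-remainder argument over $\bar S$ produces a single $\bar g$ with $\bar g \equiv \bar f \pmod{\bar J_i}$ for all $i$, i.e. $\bar f - \bar g \in \cap_i \bar J_i$, and then lifting $\bar g$ to $g \in K$ (possible since $g \in J_i$ after correcting by multiples of $h$) shows $\bar f = \bar g \in \bar K = (K+(h))/(h)$. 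I would first write out the Chinese-remainder/linear-algebra step carefully — showing the evaluation map $[S]_\nu \to \bigoplus_i [S/J_i]_\nu$ restricted to $H$ is compatible with gluing — and handle the degenerate case where all but finitely many $J_i$ have height $r$ (points) separately, since then the restricted arrangement has components that become empty and the bookkeeping changes, which is precisely why the hypothesis only requires one $J_i$ of dimension $\ge 2$.
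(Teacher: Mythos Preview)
Your proposal has a fundamental gap: you set out to prove the \emph{equality}
\(\cap_i(J_i+(h)) = K+(h)\) (equivalently, \(\HF((K+(h))/(h),\nu)=\HF(\cap_i\bar J_i,\nu)\) for all \(\nu\)), and this is strictly stronger than the lemma and is \emph{false} in general. A concrete counterexample: take three generic lines in \(\PP^3\) (so \(r=4\), \(\ell=3\), each \(J_i\) of height \(2\)). There is a unique quadric through them, so \(\dim_k[K]_2=1\); hence the image \([\bar K]_2\) in \(\bar S=S/(h)\) has dimension at most \(1\). But \(\cap_i\bar J_i\) is the ideal of three generic points in \(\PP^2\), for which \(\dim_k[\cap_i\bar J_i]_2=3\). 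Thus \(K+(h)\subsetneq\cap_i(J_i+(h))\); only after saturating do they agree. Your Chinese-remainder sketch at the end is circular for the same reason: given \(\bar f\in\cap_i\bar J_i\), the congruences \(\bar g\equiv\bar f\pmod{\bar J_i}\) are solved by \(\bar g=\bar f\), and the actual difficulty --- lifting \(\bar g\) to an element of \(K\) of the \emph{same degree} --- is exactly the (false) equality you are assuming. Separately, your appeal to Proposition~\ref{prp:Conca-Herzog} is illegitimate here: that result requires the arrangement to be linearly general, which the lemma does not assume.

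What the paper does instead is short and clean, and never attempts degreewise equality. From the exact sequence \(0\to S/K\to\prod_i S/J_i\to C\to 0\) one tensors with \(S/(h)\); since a generic \(h\) is regular on each \(S/J_i\), the resulting long exact sequence identifies the quotient
\[
\frac{\cap_i(J_i+(h))}{K+(h)}\;\cong\;{\Tor}_1^S\!\left(C,\frac{S}{(h)}\right)\;=\;\ker\bigl(C(-1)\xrightarrow{\,h\,}C\bigr).
\]
Because \(h\) is almost regular on \(C\), this kernel has finite length. Hence \(\cap_i(J_i+(h))\subseteq (K+(h))^{\sat}\), and since \(\cap_i(J_i+(h))\) is saturated (it is an intersection of primes at least one of which is \(\neq\mf\), by the hypothesis \(\dim S/J_i\ge 2\)), the two saturations coincide. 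The correct target, in other words, is \emph{finite length of the quotient}, not its vanishing.
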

\begin{proof}
If some $J_s$ is the maximal homogeneous ideal $\mathfrak{m}$, then all ideals $\cap_{i\in [\ell]} J_i, \, \cap_{i\in [\ell]} J_i + (h), \, \cap_{i \in [\ell]} \big(J_i + (h))$ stay the same after removing it, so that we may assume $J_i \subsetneq \mathfrak{m}$ for every $i \in [\ell]$.

Consider the short exact sequence $$0 \rightarrow \frac{S}{\cap_{i\in [\ell]} J_i}  \stackrel{\varphi}{\rightarrow} \prod_{i \in [\ell]} S/J_i \rightarrow C \rightarrow 0,$$ where $\varphi$ takes the class of $a \in S$ in $S/\cap_{i\in [\ell]} J_i$ to the classes of $a$ mod $J_i$ for $i\in [\ell]$, and $C$ is the cokernel. Tensoring with $S/(h)$ and using the fact that $h$ is $S/J_i$-regular for every $i$, we get 
$$0 \rightarrow {\Tor}_1^S\left(C, \frac{S}{(h)}\right) \rightarrow  \frac{S}{\cap_{i\in [\ell]} J_i+(h)}  \stackrel{\varphi}{\rightarrow} \prod_{i \in [\ell]} \frac{S}{J_i+(h)} \rightarrow \frac{C}{hC} \rightarrow 0.$$ We conclude that
$$M:=\frac{\cap_{i \in [\ell]} \big(J_i + (h)\big)}{\cap_{i\in [\ell]} J_i + (h)} = {\Tor}_1^S\left(C, \frac{S}{(h)}\right).$$ Now, we can consider $M$ as the kernel of the multiplication map $C(-1) \stackrel{h}{\rightarrow} C$. Since $h$ is generic, it is almost regular on $C(-1)$, and so $M$ has finite length. Thus with $A = \cap_{i \in [\ell]} \big(J_i + (h)\big)$ and $B=\cap_{i\in [\ell]} J_i + (h)$ we have $B \subseteq A \subseteq B^{\sat}$. This gives $B^{\sat} = A^{\sat}$. But $A$ is radical because the ideals $J_i+(h)$ are prime and at least one of them is properly contained in the maximal homogeneous ideal of $S$. This implies that $A = A^{\sat}$.
\end{proof}

\subsection{Finishing the Proof of Theorem \ref{thm:codim2}}

We proceed by induction on $r$; the case $r=3$ is a simple exercise, so we assume $r \ge 4$.

By Lemma \ref{lem:zero in degree n-2} the ideal $\cap_{i \in [n]}I_i$ is zero in degree $n-2$ and so necessarily
\begin{align*}
\reg \left( \frac{S}{\cap_{i \in [n]}I_i} \right) \ge n-2.
\end{align*} In what follows we concentrate on proving the reverse inequality.

Let $h \in [S]_1$ be a generic linear form. As such, $h$ is regular on $S / \cap_{i \in [n]}I_i$, and so Proposition \ref{prp:almost regular} gives $$\reg\left( \frac{S}{\cap_{i \in [n]}I_i}\right)=\reg\left( \frac{S}{ \cap_{i \in [n]}I_i + (h)}\right).$$ It thus suffices to show that $$\reg\left( \frac{S}{ \cap_{i \in [n]}I_i + (h)}\right) = n-2.$$ Towards that end, we consider the short exact sequence 
$$ 0 \rightarrow \frac{\cap_{i \in [n]} \big(I_i+(h)\big) } { \cap_{i \in [n]} I_i+(h)} \rightarrow \frac{S}{\cap_{i \in [n]} I_i+(h)} \rightarrow \frac{S}{\cap_{i \in [n]} \big(I_i+(h)\big)} \rightarrow 0.$$ By Lemma \ref{lem:saturation}, the first module in the exact sequence has finite length. Thus Proposition \ref{prp:regularity lemma} gives
$$\reg \left(\frac{S}{\cap_{i \in [n]} I_i+(h)}\right) = $$
 $$\max \left\{\reg\left(\frac{\cap_{i \in [n]} \big(I_i+(h)\big) } { \cap_{i \in [n]} I_i+(h)} \right), \reg\left(\frac{S}{\cap_{i \in [n]} \big(I_i+(h)\big)} \right) \right\}. $$
 
Set $\bar{S} = S/(h)$ and $\bar{I}_i = (I_i + (h)) / (h)$. Then $$\frac{S}{\cap_{i \in [n-2]} (I_i+(h))} = \frac{\bar{S}}{ \cap_{i \in [n-2]} \bar{I}_i},$$ and our induction hypothesis on $r$ gives $$\reg \left(\frac{\bar{S}}{ \cap_{i \in [n-2]} \bar{I}_i} \right) = n-4.$$ Hence $\reg (\cap_{i \in [n-2]} \bar{I}_i) = n-3$. Since taking quotient with a generic linear form can not increase the regularity (Proposition \ref{prp:almost regular}), we have 
$$ \reg (\cap_{i \in [n-2]} \bar{I}_i + \bar{I}_{n-1}) \le n-3.$$ This, together with the short exact sequence 
$$0 \rightarrow \frac{\bar{S}}{\cap_{i \in [n-1]} \bar{I}_i} \rightarrow \frac{\bar{S}}{\cap_{i \in [n-2]} \bar{I}_i} \oplus \frac{\bar{S}}{\bar{I}_{n-1}} \rightarrow \frac{\bar{S}}{\cap_{i \in [n-2]} \bar{I}_i + \bar{I}_{n-1}} \rightarrow 0,$$ and the regularity lemma, give 
$$ \reg(\cap_{i \in [n-1]} \bar{I}_i) \le n-2.$$ One more application of this argument with $\cap_{i \in [n-1]} \bar{I}_i$ and $\bar{I}_n$ gives 
$$ \reg(\cap_{i \in [n]} \bar{I}_i) \le n-1.$$
We have thus shown that $$\reg \left(\frac{S}{ \cap_{i \in [n]} (I_i+(h))} \right) \le n-2,$$ 
and so it remains to prove 
$$\reg \left(\frac{\cap_{i \in [n]} \big(I_i+(h)\big) } { \cap_{i \in [n]} I_i+(h)} \right) \le n-2.$$ 
Since $\reg (\cap_{i \in [n]} I_i+(h)) \le n$ by Propositions \ref{prp:almost regular} and \ref{prp:Derksen-Sidman}, we have that $\cap_{i \in [n]} I_i+(h)$ is saturated from degree $n$ and on. Hence in view of Lemma \ref{lem:saturation}, $\cap_{i \in [n]} \big(I_i+(h)\big)$ and $\cap_{i \in [n]} I_i+(h)$ agree from degree $n$ and on, so it is enough to prove that 
$$ \HF(\cap_{i \in [n]} (I_i+(h)), n-1 ) = \HF (\cap_{i \in [n]} I_i+(h),n-1 ).$$ 

\noindent For the Hilbert function on the left we note
$$ \HF\left(\frac{S}{\cap_{i \in [n]} (I_i+(h))}, n-1 \right) = \HF\left(\frac{\bar{S}}{ \cap_{i \in [n]}\bar{I}_i},n-1\right),$$
and now
 $$\HF(\cap_{i \in [n]} (I_i+(h)), n-1 ) = \HF(S,n-2)+  \HF(\cap_{i \in [n]}\bar{I}_i,n-1).$$\nonumber 
 
\noindent To get a handle on $\HF (\cap_{i \in [n]} I_i+(h),n-1)$, we work with the short exact sequence
$$ 0 \rightarrow \frac{S}{\cap_{i \in [n]}I_i} (-1) \stackrel{h}{\rightarrow} \frac{S}{\cap_{i \in [n]}I_i} \rightarrow \frac{S}{\cap_{i \in [n]} I_i+(h)} \rightarrow 0,$$ 
whose degree $n-1$ component gives 
\begin{align*}
\HF (\cap_{i \in [n]} I_i+(h),n-1 ) =& \HF (\cap_{i \in [n]} I_i,n-1) -\HF (\cap_{i \in [n]} I_i,n-2)\\ & +\HF(S,n-2).
\end{align*} By Lemma \ref{lem:zero in degree n-2},  
$$\HF (\cap_{i \in [n]} I_i,n-2) = 0, $$
and so 
$$\HF (\cap_{i \in [n]} I_i+(h),n-1 ) = \HF(S,n-2)+ \HF (\cap_{i \in [n]} I_i,n-1). $$
To finish the proof we have to show that 
$$ \HF(\cap_{i \in [n]}\bar{I}_i,n-1) = \HF (\cap_{i \in [n]} I_i,n-1). $$ Both these Hilbert function values are accessible via Lemma \ref{lem:HF at n-1}. Indeed, they are equal:

\begin{lemma}
We have that 
$$ \sum_{j \ge 0} (-1)^{j}  {n \choose j}   {n+r-2-2j \choose n-1} = \sum_{j \ge 0} (-1)^{j}  {n \choose j}   {n+r-3-2j \choose n-1}. $$
\end{lemma}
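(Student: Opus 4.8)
The plan is to rephrase the claimed equality as the vanishing of a single alternating sum and then derive that vanishing from Lemma~\ref{lem:binomial}. Write $n=2r-3$, and subtract the right-hand side from the left-hand side term by term. Pascal's rule $\binom{m}{n-1}-\binom{m-1}{n-1}=\binom{m-1}{n-2}$ holds for every integer $m$ under the convention that $\binom{a}{b}=0$ when $a<b$, so with $m=n+r-2-2j$ the difference of the two sums collapses to $D:=\sum_{j\ge 0}(-1)^{j}\binom{n}{j}\binom{n+r-3-2j}{n-2}$, which after substituting $n=2r-3$ reads $D=\sum_{j\ge 0}(-1)^{j}\binom{2r-3}{j}\binom{3r-6-2j}{2r-5}$. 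This is a finite sum, since $\binom{3r-6-2j}{2r-5}$ is nonzero only for $0\le j\le\lfloor(r-1)/2\rfloor$. It remains to show $D=0$.

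For that I would apply Lemma~\ref{lem:binomial} to the binomial polynomial $p(t)=\begin{bmatrix}3r-6-2t\\ 2r-5\end{bmatrix}$, whose degree is $2r-5<2r-3=n$; hence $\sum_{j=0}^{n}(-1)^{j}\binom{n}{j}p(j)=0$. The point is that this vanishing sum is a "symmetrization" of $D$, split at the index $j_{1}=\lfloor(3r-6)/2\rfloor$ where the top entry $3r-6-2j$ changes sign. For $j\le j_{1}$ the entry is $\ge 0$, so $p(j)=\binom{3r-6-2j}{2r-5}$; the terms with $j>\lfloor(r-1)/2\rfloor$ vanish, and since $\lfloor(r-1)/2\rfloor\le j_{1}$ the block $\sum_{j=0}^{j_{1}}$ equals $D$. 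For $j>j_{1}$ the entry is negative, and $\begin{bmatrix}-m\\ b\end{bmatrix}=(-1)^{b}\binom{m+b-1}{b}$ together with $(-1)^{2r-5}=-1$ gives $p(j)=-\binom{2j-r}{2r-5}$; re-indexing this second block by $j\mapsto n-j$, and using that $n$ is odd so that $(-1)^{n-j}=-(-1)^{j}$, together with $\binom{n}{j}=\binom{n}{n-j}$ and $2(n-j)-r=3r-6-2j$, turns that block into $D$ as well. Hence $0=D+D$, so $D=0$ and the two sides of the identity coincide.

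The Pascal collapse and the appeal to Lemma~\ref{lem:binomial} are routine; the delicate part is the bookkeeping in the splitting step. One must check that the excess terms of the first block genuinely vanish (i.e.\ $\lfloor(r-1)/2\rfloor\le\lfloor(3r-6)/2\rfloor$), that the negative-top entries of the second block are converted with the right sign, and that after the substitution $j\mapsto n-j$ the surviving range of indices is wide enough to recover \emph{all} nonzero terms of $D$ rather than a proper subsum. These are elementary parity and floor computations, best handled by distinguishing $r$ even from $r$ odd, but they are exactly where the argument requires care.
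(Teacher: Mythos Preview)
Your proposal is correct and follows essentially the same route as the paper: both reduce the identity via Pascal's rule to the vanishing of $D=\sum_{j\ge 0}(-1)^j\binom{2r-3}{j}\binom{3r-6-2j}{2r-5}$, apply Lemma~\ref{lem:binomial} to the polynomial $p(t)=\begin{bmatrix}3r-6-2t\\2r-5\end{bmatrix}$ of degree $2r-5<n$, and then exploit the symmetry $j\leftrightarrow n-j$ (via the identity $\begin{bmatrix}t\\b\end{bmatrix}=(-1)^b\begin{bmatrix}b-t-1\\b\end{bmatrix}$) to extract $D$ from the resulting vanishing sum. The only organizational difference is that the paper pairs term $j$ with term $n-j$ directly and concludes $\sum_{j=0}^{r-2}$ vanishes, while you split at $j_1=\lfloor(3r-6)/2\rfloor$ and show both blocks equal $D$; your floor computations indeed give $n-j_1-1=\lfloor(r-1)/2\rfloor$, so the re-indexed second block recovers exactly the nonzero range of $D$.
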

\begin{proof}
The statement is equivalent to 
$$ \sum_{j \ge 0} (-1)^{j}  {n \choose j}   {n+r-3-2j \choose n-2}=0, $$ and recalling that $n=2r-3$, equivalent to
$$ \sum_{j \ge 0} (-1)^{j}  {2r-3 \choose j}   {3r-6-2j \choose 2r-5}=0. $$ The degree of the polynomial $$ \begin{bmatrix}  3r-6-2t \\ 2r-5 \end{bmatrix}$$ is $2r-5$, so Lemma \ref{lem:binomial} gives 
$$ \sum_{j = 0}^{2r-3} (-1)^{j}  {2r-3 \choose j}   \begin{bmatrix}  3r-6-2j \\ 2r-5 \end{bmatrix}=0. $$  In the summation above there are $2r-2$ terms. For $j = 0,\dots,r-2$, we claim that the terms corresponding to $j$ and $2r-3-j$ are equal. To see this, with $b$ a positive integer, first recall the polynomial identity 
$$\begin{bmatrix} t \\ b \end{bmatrix} = (-1)^b \begin{bmatrix} b-t-1 \\ b \end{bmatrix}.$$
Now, the $(2r-3-j)$-th term is 
\begin{align*}
&(-1)^{2r-3-j} {2r-3 \choose 2r-3-j} \begin{bmatrix} 3r-6-2(2r-3-j) \\ 2r-5 \end{bmatrix} \\
&=(-1)^{-3-j} {2r-3 \choose j} \begin{bmatrix} -r+2j \\ 2r-5 \end{bmatrix} \\
&= (-1)^{-3-j} {2r-3 \choose j} (-1)^{2r-5} \begin{bmatrix} 2r-5+r-2j-1 \\ 2r-5 \end{bmatrix} \\
&= (-1)^{j} {2r-3 \choose j} \begin{bmatrix} 3r-6-2j \\ 2r-5 \end{bmatrix}, 
\end{align*} which is precisely the $j$-th term. Consequently,
$$\sum_{j = 0}^{r-2} (-1)^{j}  {2r-3 \choose j}   \begin{bmatrix}  3r-6-2j \\ 2r-5 \end{bmatrix}=0. $$ For those values of $j$, the binomial polynomial coincides with the binomial coefficient, hence 
$$\sum_{j = 0}^{r-2} (-1)^{j}  {2r-3 \choose j}   {3r-6-2j \choose  2r-5 }=0. $$ Finally, the binomial coefficient 
$${3r-6-2j \choose  2r-5 } $$ is already zero for any $j > r-2$, so that 
$$\sum_{j \ge 0} (-1)^{j}  {2r-3 \choose j}   {3r-6-2j \choose  2r-5 }=0. $$
\end{proof}

Finally, from $\reg(\cap_{i \in [n]} I_i) = n-1$ and Lemma \ref{lem:zero in degree n-2}, it immediately follows that $I_X := \cap_{i \in [n]} I_i$ has a linear graded minimal free resolution.

\section{Proof of Theorem \ref{thm:JL}} \label{section:proof-Thm-JL}

Recalling that $L_c$ is an ideal of $S$ generated by $c$ generic linear forms $\underline{h}=h_1,\dots,h_c$, we begin by recording some basic observations:

\begin{lemma} \label{lem:LJ-first}  
With $J$ any homogeneous ideal, we have:
\begin{enumerate}[(i)]
\item $\reg(J\cap L_c)\leq \reg(J)+1$. 
\item $(J\cap L_c)/JL_c$ has finite length. 
\item If $\reg(J+L_c)<\reg(J)$ then $\reg(J\cap L_c)=\reg(J)$. 
\end{enumerate} 
\end{lemma}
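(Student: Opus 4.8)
The plan is to extract all three statements from the Mayer--Vietoris short exact sequence
$$0 \longrightarrow S/(J\cap L_c) \longrightarrow S/J \oplus S/L_c \longrightarrow S/(J+L_c) \longrightarrow 0$$
together with the regularity lemma (Proposition~\ref{prp:regularity lemma}); only part~(ii) will require an extra ingredient. Throughout one may assume $J$ is a proper nonzero homogeneous ideal, so that $\reg(S/J)\ge 0$, and it is useful to record that $S/L_c$ is a polynomial ring in $r-c$ variables, whence $\reg(S/L_c)=0$; in particular the middle module of the sequence has regularity exactly $\reg(S/J)$.

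Parts~(i) and~(iii) I would obtain by pure bookkeeping with the regularity lemma applied to that sequence (with $M'=S/(J\cap L_c)$, $M=S/J\oplus S/L_c$, $M''=S/(J+L_c)$). Since each $h_i$ is a generic linear form it is almost regular on $S/(J+L_{i-1})$, so iterating Proposition~\ref{prp:almost regular} gives $\reg(S/(J+L_c))\le\reg(S/J)$; the regularity-lemma inequality $\reg(M')\le\max\{\reg(M),\reg(M'')+1\}$ then yields $\reg(S/(J\cap L_c))\le\reg(S/J)+1$, which is~(i). For~(iii) the hypothesis is $\reg(S/(J+L_c))<\reg(S/J)$; since $\reg(M)=\reg(S/J)$ is then strictly larger than $\reg(M'')$, the inequality $\reg(M)\le\max\{\reg(M'),\reg(M'')\}$ forces $\reg(S/J)\le\reg(S/(J\cap L_c))$, while $\reg(M')\le\max\{\reg(M),\reg(M'')+1\}$ gives the reverse bound using the hypothesis. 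Hence $\reg(J\cap L_c)=\reg(J)$.

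Part~(ii) is where a genuine idea enters. One always has $JL_c\subseteq J\cap L_c$, and tensoring $0\to L_c\to S\to S/L_c\to 0$ with $S/J$ identifies $(J\cap L_c)/JL_c$ with $\Tor_1^S(S/J,S/L_c)$. Since $c$ generic linear forms $\underline{h}=h_1,\dots,h_c$ form a regular sequence in $S$, I would compute this $\Tor$ from the Koszul complex on $\underline{h}$, obtaining $(J\cap L_c)/JL_c\cong H_1(\underline{h};S/J)$. Now $\underline{h}$, being generic, is a filter-regular sequence on $S/J$ --- that is, each $h_i$ avoids every associated prime $\ne\mathfrak{m}$ of $S/(J+L_{i-1})$ --- and a standard induction on $c$ via the Koszul long exact sequence shows $H_1(\underline{h};S/J)$ has finite length: the base case is $H_1(h_1;S/J)=(0:_{S/J}h_1)$, of finite length by filter-regularity, and in the inductive step $H_1(\underline{h};S/J)$ is an extension of $(0:_{S/(J+L_{c-1})}h_c)$ by a quotient of $H_1(h_1,\dots,h_{c-1};S/J)$, both of finite length. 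I expect the only real obstacle to be the identification of $(J\cap L_c)/JL_c$ with the Koszul homology $H_1(\underline{h};S/J)$; once that is in hand, the finite-length claim is routine given the genericity of the $h_i$.
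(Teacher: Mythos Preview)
Your proof is correct and follows essentially the same approach as the paper: the Mayer--Vietoris sequence plus the regularity lemma for (i) and (iii), and the identification $(J\cap L_c)/JL_c\cong \Tor_1^S(S/J,S/L_c)\cong H_1(\underline{h};S/J)$ together with almost-regularity of $\underline{h}$ for (ii). The only difference is that the paper states in one line that almost-regularity forces all positive Koszul homology $H_i(\underline{h};S/J)$ to have finite length, whereas you spell out the inductive argument via the Koszul long exact sequence; both are the same standard fact.
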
 
\begin{proof} 
(i) We have  a short exact sequence:  
\begin{align}
0\to \frac{S}{J\cap L_c} \to \frac{S}{J} \oplus \frac{S}{L_c} \to \frac{S}{J+L_c}\to 0. \tag{$*$}  
\end{align}
Since $L_c$ is generated by generic linear forms, Proposition \ref{prp:almost regular} gives  $\reg (S/(J+L_c))\leq \reg (S/J)$. Furthermore $\reg(S/L_c)=0$. Now the claim follows from the regularity lemma (Proposition \ref{prp:regularity lemma}). 

(ii) Let $h_i, \, i \in [c]$, be generic linear forms that generate the ideal $L_c$, and let $\underline{h}$ denote the sequence $h_1,\dots,h_c$. The sequence $\underline{h}$ is almost regular on $S/J$. Hence the Koszul homology $H_i(\underline{h},S/J)$ has finite length for all $i>0$. In particular $H_1(\underline{h},S/J)=\Tor^S_1(S/J, S/L_c)=(J\cap L_c)/JL_c$ has finite length. 

(iii) This follows from the exact sequence $(*)$ together with the regularity lemma.
\end{proof} 

The next step, even though intuitively non-surprising, is the hardest part behind the proof of Theorem \ref{thm:JL}:
 
\begin{proposition} \label{prp:Lc-Lc+1}  For every non-zero saturated homogeneous ideal $J\subset S$ and every $c\in [r-1]$ we have 
$$\reg(J\cap L_c)\geq \reg(J\cap L_{c+1}).$$
\end{proposition}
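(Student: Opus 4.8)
The plan is to compare the modules $J\cap L_c$ and $J\cap L_{c+1}$ by writing $L_{c+1}=L_c+(h)$ for a generic linear form $h$ independent of the $c$ generic forms cutting out $L_c$, and to run an induction (or a direct colon computation) that extracts $J\cap L_{c+1}$ from $J\cap L_c$ together with its quotient by $h$. First I would set up the short exact sequence
\begin{align*}
0\to \frac{S}{J\cap L_{c+1}}\xrightarrow{\ h\ } \text{(something)}\to \text{(something)}\to 0,
\end{align*}
more precisely I would exploit that $(J\cap L_c):h$ relates to $J\cap L_{c+1}$ because $h$ is a generic form: on the locus cut out by $L_c$ the form $h$ behaves like a generic hyperplane section, so $(J\cap L_{c})+(h)$ and $(J\cap L_c+(h))$, i.e. the image of $J\cap L_c$ in $\bar S=S/(h)$, should be comparable to $J\cap L_{c+1}$ up to finite-length modules. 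The key technical input is Lemma \ref{lem:saturation}-type behaviour: intersecting with $(h)$ commutes with quotienting by a generic linear form up to finite length, and finite-length submodules do not affect regularity by the last clause of Proposition \ref{prp:regularity lemma}.

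Concretely, the key steps in order would be: (1) Fix generic linear forms $h_1,\dots,h_{c+1}$, set $L_c=(h_1,\dots,h_c)$, $L_{c+1}=L_c+(h_{c+1})$, and write $h=h_{c+1}$, $\bar S=S/(h)$, and a bar for images in $\bar S$. (2) Observe $L_{c+1}\cap J$, modulo $(h)$, maps to $\bar L_c\cap \bar J$ inside $\bar S$ — here $\bar L_c$ is generated by $c$ generic linear forms of $\bar S$ and $\bar J$ is (a quotient of) $J$; quantify the kernel and cokernel of this map using the two short exact sequences $0\to S/(J\cap L_{c+1}):h\,(-1)\xrightarrow{h}S/(J\cap L_{c+1})\to S/((J\cap L_{c+1})+(h))\to 0$ and the analogue for $J\cap L_c$. (3) Show $(J\cap L_{c+1}):h = J\cap L_c$ up to finite length: indeed $h$ is almost regular on $S/L_c$, $S/J$ and on their product/intersection, and $L_{c+1}=L_c+(h)$ forces $(L_{c+1}):h = L_c + (\mathfrak m\text{-stuff})$, i.e. they agree in high degrees; intersecting with $J$ preserves this. (4) Conclude via the regularity lemma: $\reg(S/(J\cap L_{c+1}))\le \max\{\reg(S/((J\cap L_{c+1}):h)(-1)), \reg(S/((J\cap L_{c+1})+(h)))\}$, and then translate both right-hand terms into statements about $J\cap L_c$ (shifted) and about $\bar J\cap\bar L_c$ in $\bar S$, the latter having regularity $\le \reg(S/(J\cap L_c))$ since quotienting by a generic linear form does not raise regularity. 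Chaining these inequalities, after a shift bookkeeping and discarding finite-length pieces, yields $\reg(J\cap L_{c+1})\le \reg(J\cap L_c)$.

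The main obstacle, I expect, will be step (3): controlling $(J\cap L_{c+1}):h$ precisely enough. It is not literally equal to $J\cap L_c$ — there is a finite-length discrepancy coming from the torsion (the Tor/Koszul homology $H_1(\underline h, S/J)$ that already appeared in Lemma \ref{lem:LJ-first}(ii)), and one must argue that this discrepancy is finite length and hence invisible to regularity via the last clause of Proposition \ref{prp:regularity lemma}. A clean way to package this is to work with the Koszul complex on $h_1,\dots,h_{c+1}$ relative to $S/J$: genericity makes $\underline h$ an almost-regular sequence on $S/J$, so all higher Koszul homology $H_i(\underline h, S/J)$ has finite length, and $J\cap L_j = \Tor_1$ of the relevant truncation; the inductive step $c\rightsquigarrow c+1$ then amounts to a mapping-cone / long-exact-sequence argument in Koszul homology, where every error term is finite length. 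The bookkeeping of degree shifts (each extra $h$ contributes a $(-1)$) and making sure the inequality goes in the asserted direction is the part that needs care but no deep new idea.

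Alternatively — and this may be the cleanest route — one can avoid colon ideals entirely: tensor the defining sequence $0\to S/(J\cap L_c)\to S/J\oplus S/L_c\to S/(J+L_c)\to 0$ (as in Lemma \ref{lem:LJ-first}) with the Koszul complex on $h_{c+1}$, get a six-term exact sequence of homologies, identify $J\cap L_{c+1}$, $J\cap L_c$ and their $\bar S$-reductions among the terms, note that the $\Tor_1$ terms are finite length by almost-regularity, and read off the regularity inequality from Proposition \ref{prp:regularity lemma} applied repeatedly. I would pursue this second formulation first, falling back on the explicit colon computation only if the exact-sequence chase produces a term I cannot immediately control.
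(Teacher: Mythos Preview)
Your step (3) is where the argument breaks, and it is not a finite-length discrepancy that can be discarded --- it is simply false. With $h=h_{c+1}\in L_{c+1}$ one has $L_{c+1}:h=S$, while $J:h=J$ because $J$ is saturated and $h$ is generic (hence a non-zerodivisor on $S/J$). Therefore
\[
(J\cap L_{c+1}):h \;=\; (J:h)\cap(L_{c+1}:h)\;=\;J,
\]
exactly, not $J\cap L_c$. The quotient $J/(J\cap L_c)\cong (J+L_c)/L_c$ has finite length only when $J+L_c$ is $\mf$-primary, which fails for most $J$ and most $c$. Plugging $(J\cap L_{c+1}):h=J$ into your short exact sequence in step (4) only recovers $\reg(J\cap L_{c+1})\le \reg(J)+1$, i.e.\ Lemma~\ref{lem:LJ-first}(i), and gives no comparison with $\reg(J\cap L_c)$. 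Your alternative route has the same defect: tensoring the Mayer--Vietoris sequence for $J,L_c$ with $S/(h)$ never produces $J\cap L_{c+1}$ as a term (the colon is again $J$, and the reduction $(J\cap L_c)+(h)$ sits inside $(J+(h))\cap L_{c+1}$, not inside $J\cap L_{c+1}$), so the promised identification cannot be made.

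The paper's proof uses a genuinely different mechanism that you are missing. It argues by contradiction, sets $K=(J\cap L_c)+(J\cap L_1')$ with $L_1'=(h_{c+1})$, observes $JL_{c+1}\subseteq K\subseteq J\cap L_{c+1}$ so that $(J\cap L_{c+1})/K$ has finite length, and then runs a local-cohomology chase on the Mayer--Vietoris sequence for $J\cap L_c$ and $J\cap L_1'$. The crucial idea --- absent from your sketch --- is that the induced map $H^i_\mf(S/(J\cap L_1'))\to H^i_\mf(S/(J\cap L_{c+1}))$ factors through $H^i_\mf(S/(J\cap L_c'))$ for a \emph{second} generic ideal $L_c'$ with $L_1'\subset L_c'\subset L_{c+1}$; by genericity $\reg(J\cap L_c')=\reg(J\cap L_c)$, so this middle group vanishes in the relevant degree, yielding the contradiction. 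This factorization trick through an auxiliary $L_c'$ is the missing ingredient.
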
 
\begin{proof} We discuss first the case $c<r-1$. Suppose, by contradiction that $\reg(J\cap L_c)<\reg(J\cap L_{c+1})$.  Let us write $L_{c+1}=L_c+L_1'$, where $L_1'=(h_{c+1})$ with $h_{c+1}$ a generic linear form. Set 
$$K= (J\cap L_{c}) + (J\cap L_1').$$
Note that we have: 
$$JL_{c+1}=JL_1'+JL_c\subseteq K \subseteq  J\cap L_{c+1}.$$
Since by Lemma \ref{lem:LJ-first}(ii), $(J\cap L_{c+1})/JL_{c+1}$ has finite length, $(J\cap L_{c+1})/K$ also has finite length, and so by Proposition \ref{prp:regularity lemma} $$\reg \left(\frac{S}{J\cap L_{c+1}}\right)\leq \reg\left(\frac{S}{K}\right).$$ 

We have an exact sequence  
\begin{align*}  
0\to \frac{S}{J\cap L_c\cap L_1'} \to \frac{S}{J\cap L_1'}\oplus   \frac{S}{J\cap L_c} \stackrel{f}\to   \frac{S}{K}\to 0
\end{align*} 
 and we further have a canonical projection 
 $$g: \frac{S}{K}\to \frac{S}{J\cap L_{c+1}}.$$ 
 Note that the restriction of $f$ to $S/(J\cap L_1')$ composed with $g$ gives the canonical projection
 $$\pi_{1,c+1}: \frac{S}{J\cap L_1'} \to \frac{S}{J\cap L_{c+1}}.$$
 Now we have
 \begin{align*}
 \reg\left(\frac{S}{J\cap L_c\cap L_1'}\right) \leq \reg\left(\frac{S}{J\cap L_c}\right)+1\leq \reg\left(\frac{S}{J\cap L_{c+1}}\right),
 \end{align*} where the first inequality follows from Lemma \ref{lem:LJ-first}(i) applied to the ideal $J\cap L_c$, and the second inequality is by hypothesis. 
 
 Let $i\in [r]$ and $a\in \NN$ be such that 
 $$\left[H^i_\mf \left(\frac{S}{J\cap L_{c+1}}\right)\right]_a\neq 0 \mbox{ and }  i+a=\reg\left(\frac{S}{J\cap L_{c+1}}\right). $$ Since $J$ is saturated, $J\cap L_{c+1}$ is as well saturated; thus $i>0$.  
 
Consider the maps induced in local cohomology by the above short exact sequence: 
 
 $$\begin{matrix} 
 \left[H^i_\mf\left(\frac{S}{J\cap L_1'}\right)\right]_a \\ 
 \oplus \\
 \left[H^i_\mf\left(\frac{S}{J\cap L_c}\right)\right]_a
 \end{matrix}
  \to \left[H_\mf^i\left(\frac{S}{J\cap L_{c+1}}\right)\right]_a \to \left[H^{i+1}_\mf\left(\frac{S}{J\cap L_c\cap L_1'}\right)\right]_a, $$
 where we have used the fact that  $$H_\mf^i\left(\frac{S}{K}\right)\simeq H_m^i\left(\frac{S}{J\cap L_{c+1}}\right)$$ via the map induced by the projection $g$. Now $$\left[H^i_\mf\left(\frac{S}{J\cap L_c}\right)\right]_a=0,$$ since $$a+i=\reg\left(\frac{S}{J\cap L_{c+1}}\right)>\reg\left(\frac{S}{J\cap L_{c}}\right).$$ Moreover, 
$$\left[H^{i+1}_\mf\left(\frac{S}{J\cap L_c\cap L_1'}\right)\right]_a=0,$$ since 
$$i+1+a=1+\reg\left(\frac{S}{J\cap L_{c+1}}\right)>\reg\left(\frac{S}{J\cap L_c\cap L_1'}\right).$$ 
Therefore we have a surjective map 
\begin{align*} 
\varepsilon: \left[H^i_\mf\left(\frac{S}{J\cap L_1'}\right)\right]_a  \to \left[H_\mf^i\left(\frac{S}{J\cap L_{c+1}}\right)\right]_a  \neq  0,
\end{align*} 
which, as observed above, is induced by the canonical projection $\pi_{1,c+1}$. But $\pi_{1,c+1}$ is the composition of the two canonical projections 
$$ \frac{S}{J\cap L_1'}\stackrel{\pi_{1,c}}\to  \frac{S}{J\cap L_c'}\stackrel{\pi_{c,c+1}}\to   \frac{S}{J\cap L_{c+1}},$$ where $L_c'$ is an ideal generated by $c$ generic linear forms, contained in $L_{c+1}$ and containing $L_1'$. The above composition induces maps 
\begin{align*} 
\left[H^i_\mf\left(\frac{S}{J\cap L_1'}\right)\right]_a  \to \left[H_\mf^i\left(\frac{S}{J\cap L_{c}'}\right)\right]_a  \to \left[H_\mf^i\left(\frac{S}{J\cap L_{c+1}}\right)\right]_a,
\end{align*} whose composition is the above surjective map $\epsilon$. But as we have already seen, the middle term is zero, which is a contradiction. 
   
It remains to discuss the case $c=r-1$. In this special case, since $L_{c+1}=L_r=\mf$ and $J\cap L_{c+1}=J$, we have to prove that $\reg(J\cap L_{r-1})\geq \reg(J)$. As $(J+L_c)/L_c$ is a principal ideal of $S/L_c$, say generated by the class of a polynomial $p$ of degree equal to the initial degree of $J$, we have $J+L_{r-1}=(p)+L_{r-1}$ where $\deg (p)=\min\{ a : J_a\neq 0\}$. Note that $\reg(J+L_{r-1})=\deg (p)$. If $\deg(p)<\reg(J)$ we conclude by Lemma \ref{lem:LJ-first}(iii) that $\reg(J\cap L_{r-1})=\reg(J)$.  Otherwise $\deg(p)=\reg(J)$ (one has that $J$ has a linear resolution) and the short exact sequence $(*)$ gives 
 $$0\to H^0_\mf\left(\frac{S}{J+L_{r-1}}\right)= \frac{S}{J+L_{r-1}} \to H^1_\mf\left(\frac{S}{J\cap L_{r-1}}\right).$$ It follows that 
 $$ \reg \left( \frac{S}{J \cap L_{r-1}} \right) \ge 1 + \reg\left(\frac{S}{J+L_{r-1}}\right) = \reg(J),$$
 that is $\reg(J\cap L_{r-1})\geq \reg(J)+1$.
 \end{proof} 
 
 We can now refine Lemma \ref{lem:LJ-first}:
 
 \begin{proposition} \label{prp:JL-iff}
 Suppose that $J$ is a saturated ideal. Then $\reg(J\cap L_c)=\reg(J)+1$ if and only if $\reg(J+L_c)=\reg(J)$.
 \end{proposition}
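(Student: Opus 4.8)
The plan is to establish the two implications separately. The ``only if'' direction will be essentially immediate from the material already at hand, while the ``if'' direction --- the substantive one --- will be reduced, via Proposition~\ref{prp:Lc-Lc+1}, to the single value $c=\gamma$, where a forced drop in regularity produces a socle element that can be transported through the defining exact sequence. Throughout set $R:=\reg(J)$.

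For the ``only if'' direction, note first that applying Proposition~\ref{prp:almost regular} successively to the $c$ generic linear generators of $L_c$ (each almost regular on the successive quotient) gives $\reg(J+L_c)\le R$ unconditionally. If in addition $\reg(J+L_c)<R$, then Lemma~\ref{lem:LJ-first}(iii) yields $\reg(J\cap L_c)=R$, contradicting the hypothesis $\reg(J\cap L_c)=R+1$. Hence $\reg(J\cap L_c)=R+1$ forces $\reg(J+L_c)=R$.

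Now assume $\reg(J+L_c)=R$, so that $c\le\gamma$. Since $c'\mapsto\reg(J\cap L_{c'})$ is non-increasing by Proposition~\ref{prp:Lc-Lc+1}, and $\reg(J\cap L_c)\le R+1$ by Lemma~\ref{lem:LJ-first}(i), it is enough to prove $\reg(J\cap L_\gamma)\ge R+1$. (If $R=1$ then $J$ has a linear resolution and $\gamma=r$; in this case the last case in the proof of Proposition~\ref{prp:Lc-Lc+1}, run with $c=r-1$, already gives $\reg(J\cap L_{r-1})=R+1$, and the same monotonicity finishes the job, so assume henceforth $\gamma\le r-1$.) Write $L_{\gamma+1}=L_\gamma+(h)$ with $h$ a generic, hence almost regular, linear form on $S/(J+L_\gamma)$. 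Proposition~\ref{prp:almost regular} gives
$$R-1=\reg\left(\frac{S}{J+L_\gamma}\right)=\max\left\{\reg\left(\frac{S}{J+L_{\gamma+1}}\right),\ \reg\left(H^0_\mf\left(\frac{S}{J+L_\gamma}\right)\right)\right\}.$$
Maximality of $\gamma$ forces $\reg(J+L_{\gamma+1})<R$, i.e.\ $\reg(S/(J+L_{\gamma+1}))\le R-2$, so the finite-length module $H^0_\mf(S/(J+L_\gamma))$ has regularity $R-1$, whence $[H^0_\mf(S/(J+L_\gamma))]_{R-1}\neq 0$. Feeding this socle class through the long exact local cohomology sequence of
$$0\to \frac{S}{J\cap L_\gamma}\to \frac{S}{J}\oplus\frac{S}{L_\gamma}\to \frac{S}{J+L_\gamma}\to 0,$$
and using that $H^0_\mf(S/J)=0$ (as $J$ is saturated) and $H^0_\mf(S/L_\gamma)=0$ (as $\gamma\le r-1$, so $L_\gamma$ is the saturated ideal of a linear subspace), we see that the connecting map $\delta\colon H^0_\mf(S/(J+L_\gamma))\to H^1_\mf(S/(J\cap L_\gamma))$ is injective. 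Therefore $[H^1_\mf(S/(J\cap L_\gamma))]_{R-1}\neq 0$, so $\reg(S/(J\cap L_\gamma))\ge R$, i.e.\ $\reg(J\cap L_\gamma)\ge R+1$, as required.

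The step I expect to be the main obstacle is this reduction to $c=\gamma$. For $c<\gamma$ the ideal $J+L_c$ may already be saturated, so no socle class lives in degree $R-1$ and the connecting-map argument gives nothing directly; it is precisely the guaranteed drop in regularity upon adjoining the $(\gamma+1)$-st generic linear form, combined with the monotonicity of $\reg(J\cap L_{c})$ supplied by Proposition~\ref{prp:Lc-Lc+1}, that places the needed cohomology class in exactly the right degree.
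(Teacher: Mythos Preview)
Your proof is correct and follows essentially the same route as the paper's: both reduce via Proposition~\ref{prp:Lc-Lc+1} to the maximal index $c=\gamma$, use the drop $\reg(J+L_{\gamma+1})<\reg(J)$ to force a nonzero class of degree $R-1$ in $H^0_\mf(S/(J+L_\gamma))$, and then push that class into $H^1_\mf(S/(J\cap L_\gamma))$ via the connecting map of \eqref{eq:sum-intersection-J-Lc}. The only cosmetic differences are that the paper argues by contradiction and phrases the key step through $0:_M x$ and Eisenbud's Proposition~20.20 rather than directly through $H^0_\mf(M)$ and Proposition~\ref{prp:almost regular}; your formulation is arguably a bit cleaner.
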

 \begin{proof}
 According to Lemma \ref{lem:LJ-first}(i) and Proposition \ref{prp:Lc-Lc+1}, the regularity of $J \cap L_c$ can either be $\reg(J)+1$ or $\reg(J)$. 
 
 By Lemma \ref{lem:LJ-first}(iii) we know that $\reg(J\cap L_c)=\reg(J)+1$ implies $\reg(J+L_c)=\reg(J)$. We argue for the reverse direction. For the sake of a contradiction, suppose $$\reg(J+L_c)=\reg(J)=\reg(J \cap L_c).$$ 
  
 We may assume $\reg(J+L_{c+1})<\reg(J)$; otherwise we simply replace $L_c$ with $L_{c+1}$, noting that $\reg(J\cap L_c)=\reg(J)$ implies $\reg(J\cap L_{c+1})=\reg(J)$ by Proposition \ref{prp:Lc-Lc+1}. Set $M=S/(J+L_c)$. The generic hyperplane section $M/x M$ of $M$ can be realized by taking $x=h_{c+1}$, where $h_{c+1}$ is a generic linear form, i.e. $M/xM = S/(J+L_{c+1})$. Since $x$ is almost regular on $M$, the kernel $0:_M x$ of the multiplication map $M \stackrel{x}{\rightarrow} M$ has finite length; thus Proposition 20.20 in \cite{Eisenbud:CA} gives  $$ \reg(M)=\max\{ \reg(M/xM), \reg(0:_M x)\}.$$ In our setting, by hypothesis we have $\reg(M/xM)<\reg(M)$, so $\reg(M)=\reg(0:_M x).$ 
Now the short exact sequence $(*)$, together with the assumption that $J$ is saturated, implies that $H_\mf^0(M)$ is a submodule of $H_\mf^1(S/(J\cap L_c))$. Since $0:_M x$ is a submodule of $H_\mf^0(M)$, we have
\begin{align*}
\reg(J)-1&=\reg(S/(J\cap L_c)) \ge 1 + \reg(H_\mf^0(M))\\
& \ge 1 + \reg(0:_M x) = 1 + \reg(M)=\reg(J),
\end{align*} which is a contradiction.
 \end{proof}
 
Theorem \ref{thm:JL} now immediately follows from Proposition \ref{prp:JL-iff} and Lemma \ref{lem:LJ-first}(iii), together with the remark that if $c \le \depth(S/J)$, then $L_c$ is generated by an $S/J$-regular sequence and so $\reg(S/J) = \reg(S/(J+L_c))$. 

\section{Proof of Theorem \ref{thm:main}} \label{section:proof-Thm-main}
 
 For $i \in [n]$ we let $I_i$ be generated by $c_i \ge 2$ generic linear forms. We first prove:
 
 \begin{lemma}\label{lem:n-1}
Under the assumptions of Theorem \ref{thm:main} and with $n \ge 2r-3$, we have $\reg(\cap_{i \in [n]} I_i) \le n-1$. 
 \end{lemma}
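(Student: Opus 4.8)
The plan is to run an induction on $r$ with an inner induction that builds up the union of linear spaces one at a time, exactly mirroring the structure of the "Finishing the Proof of Theorem \ref{thm:codim2}" subsection. First I would cut down by a generic linear form: since a generic $h\in[S]_1$ is regular on $S/\cap_{i\in[n]}I_i$, Proposition \ref{prp:almost regular} reduces the claim to an estimate for $\reg(S/(\cap_{i\in[n]}I_i+(h)))$. Using the short exact sequence
$$0\to \frac{\cap_{i\in[n]}(I_i+(h))}{\cap_{i\in[n]}I_i+(h)}\to \frac{S}{\cap_{i\in[n]}I_i+(h)}\to \frac{S}{\cap_{i\in[n]}(I_i+(h))}\to 0,$$
Lemma \ref{lem:saturation} tells us the leftmost module has finite length (note $n\ge 2r-3\ge 3$ forces $r\ge 3$, and with $c_i\ge 2$ at least one $\dim S/I_i\ge 2$ when $r\ge 4$; the case $r=3$ is handled separately as a base case), so by the regularity lemma it suffices to bound the regularity of the two outer terms by $n-2$.

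For the quotient $\bar S/\cap_{i\in[n]}\bar I_i$, where $\bar S=S/(h)$ and $\bar I_i=(I_i+(h))/(h)$, I would argue as in Theorem \ref{thm:codim2}: by the induction hypothesis on $r$ (applied in the polynomial ring $\bar S$ of dimension $r-1$, using $n\ge 2r-3 > 2(r-1)-3$), one gets $\reg(\bar S/\cap_{i\in[n']}\bar I_i)\le \text{(something)}$ for a starting batch of $n'=2(r-1)-3$ subspaces, and then one adds the remaining $\bar I_i$ one at a time via the mapping-cone / regularity-lemma argument
$$0\to \frac{\bar S}{\cap_{i\in[m+1]}\bar I_i}\to \frac{\bar S}{\cap_{i\in[m]}\bar I_i}\oplus\frac{\bar S}{\bar I_{m+1}}\to \frac{\bar S}{\cap_{i\in[m]}\bar I_i+\bar I_{m+1}}\to 0,$$
each step raising the regularity bound by at most $1$, so that $\reg(\cap_{i\in[n]}\bar I_i)\le n-1$, hence $\reg(\bar S/\cap_{i\in[n]}\bar I_i)\le n-2$. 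The subtle point is choosing the right starting index $n'$ and checking the arithmetic of the increments lands exactly at $n-2$; here the extra codimension (all $c_i\ge 2$, possibly $>2$) only helps, since the residual Hilbert-function estimate from Lemma \ref{lem:zero in degree n-2} applies after a further generic restriction to codimension exactly $2$.

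For the finite-length module $\cap_{i\in[n]}(I_i+(h))/(\cap_{i\in[n]}I_i+(h))$, I would use that $\reg(\cap_{i\in[n]}I_i+(h))\le n$ (from Proposition \ref{prp:Derksen-Sidman} and Proposition \ref{prp:almost regular}), so the two ideals agree in degrees $\ge n$, and by Lemma \ref{lem:saturation} $\cap_{i\in[n]}(I_i+(h))$ is already saturated; thus it remains to compare Hilbert functions in degree $n-1$. Via the exact sequence $0\to (S/\cap_{i\in[n]}I_i)(-1)\xrightarrow{h} S/\cap_{i\in[n]}I_i\to S/(\cap_{i\in[n]}I_i+(h))\to 0$ this reduces to controlling $\HF(\cap_{i\in[n]}I_i,n-2)$, which is where I expect the main obstacle to lie: for codimension-$2$ spaces and $n=2r-3$ this is exactly $0$ by Lemma \ref{lem:zero in degree n-2}, but for general $n\ge 2r-3$ and general codimensions $c_i\ge 2$ one needs either a vanishing or a sufficiently small value, obtained by degenerating via Lemma \ref{lem:sundial} and applying the Castelnuovo-inequality machinery of Section \ref{section:Castelnuovo's inequality} together with Lemma \ref{lem:auxiliary}. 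The hard part will be organizing these degenerations so the bookkeeping of degrees closes up to give $\le n-2$ uniformly; everything else is routine homological algebra with the regularity lemma.
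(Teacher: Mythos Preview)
Your plan essentially tries to rerun the entire proof of Theorem~\ref{thm:codim2} at the level of arbitrary $n\ge 2r-3$ and arbitrary codimensions $c_i\ge 2$. This is not how the paper proceeds, and there is a genuine gap in your outline.

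The gap is in the finite-length piece. To get $\reg\big(\cap_i(I_i+(h))/(\cap_i I_i+(h))\big)\le n-2$ you need the two ideals to agree in degree $n-1$; equivalently, the saturation index of $\cap_i I_i+(h)$ must be $\le n-1$. In the proof of Theorem~\ref{thm:codim2} this is achieved for the single value $n=2r-3$ and $c_i=2$ by the explicit Hilbert-function identities of Lemmas~\ref{lem:HF at n-1}, \ref{lem:HF=1}, \ref{lem:zero in degree n-2}, and the closing identity showing $\HF(\cap_i I_i,n-1)=\HF(\cap_i\bar I_i,n-1)$. For $n>2r-3$ one does \emph{not} have $\HF(\cap_i I_i,n-2)=0$ in general (there are forms of degree $n-2$ through the arrangement), so the mechanism you invoke from Lemma~\ref{lem:zero in degree n-2} simply does not apply, and your proposed degeneration/Castelnuovo bookkeeping would have to establish a different numerical identity that you never state, let alone prove. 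You have correctly identified this as ``the main obstacle'' but have not resolved it.

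The paper avoids all of this. Lemma~\ref{lem:n-1} is deduced in a few lines from results already in hand. First, for each $i$ choose a codimension-$2$ subideal $I_i'\subset I_i$. For $n=2r-3$, Theorem~\ref{thm:codim2} gives $\reg(\cap_i I_i')=n-1$; for $n>2r-3$, repeated use of Lemma~\ref{lem:LJ-first}(i) (with $J$ the intersection of the first $m$ ideals and $L_2=I_{m+1}'$) raises the bound by one per added space, yielding $\reg(\cap_{i\in[n]}I_i')\le n-1$. Second, to pass from $I_i'$ to $I_i$ when $c_i>2$, apply Proposition~\ref{prp:Lc-Lc+1} with $J=\bigcap_{j\neq i}(\text{current ideals})$, $L_2=I_i'$, $L_{c_i}=I_i$: the regularity can only drop. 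Iterating over $i$ gives $\reg(\cap_i I_i)\le n-1$. No hyperplane section, no saturation comparison, and no new Hilbert-function computation is needed; Theorem~\ref{thm:codim2} absorbs all of that work once and for all at $n=2r-3$.
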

 \begin{proof}
 For each $i \in [n]$ let $I_i' \subset I_i$ be an ideal generated by two generic linear forms. By Theorem \ref{thm:codim2} and Lemma \ref{lem:LJ-first}(i), we have $\reg(\cap_{i \in [n]} I_i') \le n-1$. If $c_i = 2$ for every $i \in [n]$, we are done. So we may assume $c_n>2$. We may write $I_n = (h_1,h_2,\dots,h_{c_n})$ and $I_n' = (h_1,h_2)$. Set $J = \cap_{i \in [n-1]} I_i'$, $L_2 = I_n'$  and $L_{c_n} = I_n$. Then Proposition \ref{prp:Lc-Lc+1} gives 
$$n-1 \ge \reg (\cap_{i \in [n]} I_i')  = \reg(J \cap L_2) \ge \reg(J \cap L_{c_n}) = \reg((\cap_{i \in [n-1]} I_i') \cap I_n).$$ Continuing in a similar fashion by inductively applying Proposition \ref{prp:Lc-Lc+1} to the ideal $(\cap_{i \in [n-1]} I_i') \cap I_n$, proves the statement. 
\end{proof}
 
Theorem \ref{thm:main} now follows immediately from Lemma \ref{lem:n-1} and the following sub-additivity property of ideals of subspace arrangements:

\begin{lemma}
Let $I_a \subsetneq \mf, \, a \in \mathcal{A}=[\ell]$ and $J_b \subsetneq \mf, \, b \in \mathcal{B}=[m]$ be ideals generated by generic linear forms, where $\ell$ and $m$ are positive integers. Then $$ \reg \left((\cap_{a \in \mathcal{A}} I_a) \cap (\cap_{b \in \mathcal{B}} J_b)\right) \le \reg (\cap_{a \in \mathcal{A}} I_a) + \reg (\cap_{b \in \mathcal{B}} J_b).$$
\end{lemma}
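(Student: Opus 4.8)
The plan is to prove the sub-additivity statement by induction on $|\mathcal{B}| = m$, reducing the general case to the situation where we intersect a fixed subspace-arrangement ideal with the ideal of a \emph{single} generic linear space, and then handling that single step with the machinery of Theorem \ref{thm:JL}. Write $I = \cap_{a \in \mathcal{A}} I_a$ and $K = \cap_{b \in \mathcal{B}} J_b$, and set $\rho = \reg(I)$, $\sigma = \reg(K)$. The base case $m=1$ is: for $J_1$ generated by $c$ generic linear forms, $\reg(I \cap J_1) \le \reg(I) + \reg(J_1) = \rho + c$. This is exactly Theorem \ref{thm:JL} applied to $J = I$ (which is saturated, being the radical ideal of a subspace arrangement) with $L_c = J_1$, since that theorem gives $\reg(I \cap L_c) \le \reg(I) + 1 \le \reg(I) + c$ for any $c$; in fact we get much more room than we need. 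So the base case is immediate.

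For the inductive step, suppose $m \ge 2$ and write $K = K' \cap J_m$ where $K' = \cap_{b \in [m-1]} J_b$. The subtlety is that $\reg(K)$ need not equal $\reg(K') + \reg(J_m)$, so we cannot naively combine $\reg(I \cap K') \le \rho + \reg(K')$ with a further step. Instead I would argue more carefully using the exact sequence
\begin{align*}
0 \to \frac{S}{I \cap K' \cap J_m} \to \frac{S}{I \cap K'} \oplus \frac{S}{J_m} \to \frac{S}{(I \cap K') + J_m} \to 0,
\end{align*}
together with the regularity lemma (Proposition \ref{prp:regularity lemma}). By the inductive hypothesis $\reg(I \cap K') \le \rho + \reg(K')$, and $\reg(S/J_m) = 0$. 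The term $(I \cap K') + J_m$ reduces, working modulo $J_m$, to a subspace-arrangement ideal in fewer variables: modulo $J_m$ the ideals $I_a$ and $J_b$ become (images of) ideals generated by generic linear forms in the polynomial ring $\bar S = S/J_m$, so $\reg(S/((I \cap K') + J_m)) = \reg(\bar S / (\bar I \cap \bar K'))$ where $\bar I, \bar K'$ are the corresponding intersection ideals in $\bar S$. Since quotienting by generic linear forms cannot raise regularity (Proposition \ref{prp:almost regular}), $\reg(\bar I \cap \bar K') \le \reg(I \cap K') \le \rho + \reg(K')$, hence $\reg(S/((I\cap K')+J_m)) \le \rho + \reg(K') - 1$. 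Feeding these bounds into the regularity lemma yields $\reg(S/(I \cap K)) \le \max\{\rho + \reg(K') - 1, \ \rho + \reg(K') - 1 + 1\}$, i.e. $\reg(I \cap K) \le \rho + \reg(K')$. This is almost the claim, but I still owe the passage from $\reg(K')$ to $\reg(K) = \reg(K' \cap J_m)$: one must check $\reg(K') \le \reg(K' \cap J_m)$, which is exactly the special case of the whole theorem with $I$ absent — but that is precisely Proposition \ref{prp:Lc-Lc+1} (or directly Theorem \ref{thm:JL}): intersecting a saturated subspace-arrangement ideal with a generic-linear-forms ideal never decreases the regularity, since $\reg(K' \cap L_c) \in \{\reg(K'), \reg(K')+1\}$. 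So $\reg(I \cap K) \le \rho + \reg(K') \le \rho + \reg(K) = \reg(I) + \reg(K)$, completing the induction.

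The main obstacle I anticipate is bookkeeping the relationship between $\reg(K')$ and $\reg(K)$ correctly: the naive two-step application of the $m=1$ case would give $\reg(I \cap K' \cap J_m) \le \reg(I \cap K') + \reg(J_m) \le \rho + \reg(K') + c_m$, which is \emph{worse} than $\rho + \sigma$ whenever $\sigma < \reg(K') + c_m$ (and this can happen). So the argument genuinely has to exploit that each single intersection with a generic linear space costs at most $+1$ in regularity, and must align these $+1$'s with the increments in $\reg(\cap_{b \in B} J_b)$ as $B$ grows — which is why routing everything through Theorem \ref{thm:JL} and Proposition \ref{prp:Lc-Lc+1}, rather than through a crude product bound, is essential. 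A secondary point requiring care is the reduction modulo $J_m$: one must confirm that the images of generic linear forms remain generic linear forms in the quotient ring and that saturation is preserved well enough for the cohomological/regularity statements to transfer, but this is standard for generic arrangements and follows the same template used in the proof of Theorem \ref{thm:codim2}.
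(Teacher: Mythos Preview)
Your inductive step contains an off-by-one slip that, once corrected, exposes a genuine gap. From the bounds $\reg(S/(I\cap K'))\le \rho+\reg(K')-1$ and $\reg(S/((I\cap K')+J_m))\le \rho+\reg(K')-1$, the regularity lemma gives $\reg(S/(I\cap K))\le \rho+\reg(K')$, which translates to $\reg(I\cap K)\le \rho+\reg(K')+1$, not $\rho+\reg(K')$ as you wrote. Combined with $\reg(K')\le\reg(K)$ this yields only $\reg(I\cap K)\le \rho+\reg(K)+1$ whenever $\reg(K)=\reg(K')$, which is one too many. You correctly flagged the danger of ``aligning the $+1$'s'', but the alignment is not established: Theorem~\ref{thm:JL} allows $\reg(K'\cap J_m)=\reg(K')$ (namely when $c_m>\gamma(K')$), and in that case nothing in your argument forces $\reg((I\cap K')\cap J_m)\le\reg(I\cap K')$; for that you would need $\gamma(I\cap K')\le\gamma(K')$, which you have not shown and which does not follow from the tools invoked. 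This case genuinely occurs: for instance, with all $J_b$ of codimension $2$ and $m=2r-3$, Theorem~\ref{thm:codim2} gives $\reg(\cap_b J_b)=2r-4$, so along the chain $K_1\subset K_2\subset\cdots\subset K_{2r-3}$ at least one step has increment zero. A separate, smaller issue: the asserted equality $\reg(S/((I\cap K')+J_m))=\reg(\bar S/(\bar I\cap\bar K'))$ is false in general, since the image of an intersection is only contained in the intersection of the images (cf.\ Lemma~\ref{lem:saturation}); the inequality you actually need follows directly from Proposition~\ref{prp:almost regular} without passing to $\bar I\cap\bar K'$.

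The paper's proof avoids the step-by-step bookkeeping entirely. It shows that $(I_\mathcal{A}\cap J_\mathcal{B})/(I_\mathcal{A}J_\mathcal{B})=\Tor_1^S(S/I_\mathcal{A},S/J_\mathcal{B})$ has finite length (because the product ideal and the intersection ideal have the same saturation for generic linear arrangements), and then applies Caviglia's theorem on sub-additivity of regularity for tensor products with finite-length $\Tor_1$ to get $\reg(S/(I_\mathcal{A}+J_\mathcal{B}))\le\reg(S/I_\mathcal{A})+\reg(S/J_\mathcal{B})$. One application of the regularity lemma to the sum--intersection sequence then finishes in a single stroke.
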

\begin{proof}
For convenience, we set $I_\mathcal{A} = \cap_{a \in \mathcal{A}} I_a$ and $J_\mathcal{B} = \cap_{b \in \mathcal{B}} J_b$. We have inclusions of ideals 
$$\textstyle (\prod_{a \in \mathcal{A}} I_a) (\prod_{b \in \mathcal{B}} J_b) \subset I_\mathcal{A}  J_\mathcal{B} \subset I_\mathcal{A} \cap J_\mathcal{B}.$$ By Proposition 3.4 in \cite{conca2003castelnuovo}, we have 
$$ \textstyle \left[(\prod_{a \in \mathcal{A}} I_a) (\prod_{b \in \mathcal{B}} J_b)\right]^{\sat} = I_\mathcal{A} \cap J_\mathcal{B},$$ and hence also
$$ \textstyle (I_\mathcal{A}  J_\mathcal{B})^{\sat} = I_\mathcal{A} \cap J_\mathcal{B}.$$
Thus the $S$-module 
$$ {\Tor}_1^S\left(\frac{S}{I_\mathcal{A}}, \frac{S}{J_\mathcal{B}}\right) = \frac{I_\mathcal{A} \cap J_\mathcal{B}} {I_\mathcal{A}J_\mathcal{B}},$$ is zero-dimensional, and so \cite{caviglia2007bounds} gives 
$$\reg \left(\frac{S}{I_\mathcal{A}+J_\mathcal{B}} \right)=\reg \left(\frac{S}{I_\mathcal{A}} \otimes_S \frac{S}{J_\mathcal{B}} \right) \le \reg \left(\frac{S}{I_\mathcal{A}}\right) + \reg\left(\frac{S}{J_\mathcal{B}} \right).$$ With this, the short exact sequence 
$$0 \rightarrow \frac{S}{I_\mathcal{A} \cap J_\mathcal{B}} \rightarrow \frac{S}{I_\mathcal{A}} \oplus \frac{S}{J_\mathcal{B}} \rightarrow \frac{S}{I_\mathcal{A} + J_\mathcal{B}} \rightarrow 0$$ and the regularity lemma (Proposition \ref{prp:regularity lemma}) give the assertion. \end{proof}

\bibliographystyle{amsalpha}
\bibliography{Conca-Tsakiris-JEMS-arXiv.bbl}

\end{document}